\newcommand{\Cay}{\operatorname{Cay}}
\newcommand{\mb}{\mathbf}
\newtheorem{theorem}{Theorem}
\newtheorem{lemma}[theorem]{Lemma}
\newtheorem{proposition}[theorem]{Proposition}
\newtheorem{conjecture}[theorem]{Conjecture}
\numberwithin{theorem}{section}
\numberwithin{equation}{section}
\theoremstyle{definition}
\newtheorem{observation}[theorem]{Observation}
\newtheorem{definition}[theorem]{Definition}
\newtheorem{problem}[theorem]{Problem}
\newtheorem{remark}[theorem]{Remark}
\title[Katznelson's problem]{Special cases and equivalent forms of Katznelson's problem on recurrence}
\author{John T. Griesmer}
\email{jtgriesmer@gmail.com}
\address{Department of Applied Mathematics and Statistics, Colorado School of Mines, Golden, Colorado}
\begin{document}
\begin{abstract}
  We make the following three observations regarding a question popularized by Katznelson: is every subset of $\mathbb Z$ which is a set of Bohr recurrence  also a set of topological recurrence?
  \begin{enumerate}

  	\item[(i)]  If $G$ is a countable abelian group and $E\subseteq  G$ is an $I_0$ set, then every subset of $E-E$ which is a set of Bohr recurrence is also a set of topological recurrence.  In particular every subset of $\{2^n-2^m : n,m\in \mathbb N\}$ which is a set of Bohr recurrence is a set of topological recurrence.

	\item[(ii)]  Let $\mathbb Z^{\omega}$ be the direct sum of countably many copies of $\mathbb Z$ with standard basis $E$. If every subset of $(E-E)-(E-E)$ which is a set of Bohr recurrence is also a set of topological recurrence, then every subset of every countable abelian group which is a set of Bohr recurrence is also a set of topological recurrence.

	\item[(iii)]  Fix a prime $p$ and let $\mathbb F_p^\omega$ be the direct sum of countably many copies of $\mathbb Z/p\mathbb Z$ with basis $(\mb e_i)_{i\in \mathbb N}$. If for every $p$-uniform hypergraph with vertex set $\mathbb N$ and edge set $\mathcal F$ having infinite chromatic number, the Cayley graph on $\mathbb F_p^\omega$ determined by $\{\sum_{i\in F}\mb e_i:F\in \mathcal F\}$ has infinite chromatic number, then every subset of $\mathbb F_p^\omega$ which is a set of Bohr recurrence is a set of topological recurrence.
\end{enumerate}

\end{abstract}

	\maketitle

\section{Recurrence properties}\label{sec:Intro}
	We consider a problem raised in Section 4 of \cite{Katznelson}, stated as Conjecture \ref{conj:KatznelsonGeneral} below; see \cite{GKR} for history and overview of this problem.  Our main results are Proposition \ref{prop:RelativeKatznelson2Diff}, a very special case of Conjecture \ref{conj:KatznelsonGeneral}, and Proposition \ref{prop:RelativeKatznelson4Diff}, which reduces Conjecture \ref{conj:KatznelsonGeneral} to a special case.

\subsection{Definitions and main problem}  We write $\mathbb R$ for the group of real numbers with the usual topology, $\mathbb Z$ for the group of integers, and $\mathbb T$ for $\mathbb R/\mathbb Z$ with the quotient topology. The usual translation invariant metric on $\mathbb T^d$ is defined as follows: for $\mb x = (x_1,\dots,x_d)\in \mathbb T^d$, we write $\|\mb x\|$ for $\max_{j\leq d} \min_{n\in \mathbb Z} |x_j-n|$ (adopting the usual abuses of notation).  This makes $(\mb x,\mb y)\mapsto \|\mb x  - \mb y\|$ a translation invariant metric on $\mathbb T^d$.

We write $\mathcal S^1$ for the circle group $\{z\in \mathbb C:|z|=1\}$ with the operation of multiplication, which is  isomorphic as a topological group to $\mathbb T$, under the identification $t\in \mathbb T \leftrightarrow \exp(2\pi i t)\in \mathcal S^1$.

Let $G$ be a topological abelian group.  A \emph{character} of $G$ is a continuous homomorphism $\chi:G\to \mathcal S^1$. In this paper we consider only discrete groups, where every homomorphism to $\mathcal S^1$ is a character. A \emph{trigonometric polynomial} $p:G\to \mathbb C$ is a linear combination of characters.

If $A, B\subseteq  G$, we write $A+B$ for $\{a+b: a\in A, b\in B\}$ and $A-A$ for $\{a-a':a, a'\in A\}$.  The \emph{upper Banach density} of $A\subseteq  G$ is
\[d^*(A):=\sup\{\lambda(1_A):\lambda \text{ is an invariant mean on $l^{\infty}(G)$}\}.\] See \cite{BergelsonMcCutcheon} or \cite{BBF} for general exposition.  We do not prove results concerning upper Banach density in this article, but we will refer to such results from the literature.

\begin{definition}\label{def:Recurrence} Let $G$ be a discrete abelian group. If $S\subseteq  G$, we say that $S$ is a \emph{set of}
\begin{enumerate}

\item[$\bullet$] \emph{density recurrence} if for every $A\subseteq  G$ with $d^*(A)>0$, we have $(A-A)\cap S\neq \varnothing$.
  \item[$\bullet$] \emph{chromatic recurrence} if for every $r\in \mathbb N$ and every  cover of $G$ by $r$ sets $A_1,\dots, A_r$, there is a $j\leq r$ such that $(A_j-A_j)\cap S\neq \varnothing$.

      Equivalently, $S$ is a set of chromatic recurrence if for every $r\in \mathbb N$ and every $f:G\to \{1,\dots,r\}$, there exists $a, b\in G$ such that $f(a)=f(b)$ and $b-a\in S$.

  \item[$\bullet$] \emph{Bohr recurrence} if for every $d\in \mathbb N$, every homomorphism $\rho:G\to \mathbb T^d$, and every $\varepsilon>0$, there exists $s\in S$ such that $\|\rho(s)\|<\varepsilon$.

      Equivalently, $S$ is a set of Bohr recurrence if for every finite set $\{\chi_1,\dots, \chi_d\}$ of characters of $G$ and all $\varepsilon>0$, there exists $s\in S$ such that $\max_{j}|\chi_j(s)-1|<\varepsilon$.
\end{enumerate}
\end{definition}

The equivalence asserted in the third definition is due to the isomorphism of $\mathbb T$ with $\mathcal S^1$ and the fact that a collection of $d$ characters corresponds to a homomorphism $(\chi_1,\dots,\chi_d):G\to (\mathcal S^1)^d$.

\begin{definition}\label{def:BohrNhood}
  We say that $B\subseteq  G$ is a \emph{Bohr neighborhood of $0$} if there exists $d\in \mathbb N$, $\varepsilon>0$, a homomorphism $\rho:G\to \mathbb T^d$, and a neighborhood $U$ of $0$ in the usual topology on $\mathbb T^d$ such that $B$ contains $\rho^{-1}(U)$.  A Bohr neighborhood of $g\in G$ is a set of the form $U+g$, where $U$ is a Bohr neighborhood of $0$.  The \emph{Bohr topology} on $G$ is the smallest topology containing all Bohr neighborhoods.
\end{definition}
Note that $S\subseteq  G$ is a set of Bohr recurrence if and only if $S\cap B\neq \varnothing$ for every Bohr neighborhood $B$ of $0$ in $G$.

It is easy to verify that every set of density recurrence is a set of chromatic recurrence, and that every set of chromatic recurrence is a set of Bohr recurrence. Kriz \cite{Kriz} constructed a subset of $\mathbb Z$ which is a set of chromatic recurrence and not a set of density recurrence.

The question of whether every set of Bohr recurrence in $\mathbb Z$ is a set of chromatic recurrence was suggested by Veech \cite{Veech} (and implicitly by F{\o}lner \cite{Folner}), and popularized by Katznelson in \cite{Katznelson}; no conjecture was made in \cite{Katznelson} as to the correct answer.  While \cite{Katznelson} considers only the group $\mathbb Z$, the analogous question for arbitrary abelian groups is implicit in the surrounding literature.
There is no countably infinite abelian group $G$ where the answer is known.  Although we reserve judgment as to which answer is correct, stating the question as a conjecture makes the ensuing discussion more natural.
\begin{conjecture}\label{conj:KatznelsonGeneral}
  Let $G$ be a countably infinite abelian group.  If $S\subseteq  G$ is a set of Bohr recurrence, then $S$ is a set of chromatic recurrence.
\end{conjecture}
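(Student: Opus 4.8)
The plan is to argue by contraposition and then to strip both the group and the set down to the structured model isolated in the introduction. Suppose $S\subseteq G$ is \emph{not} a set of chromatic recurrence, so there is a finite coloring $f\colon G\to\{1,\dots,r\}$ whose classes $A_1,\dots,A_r$ satisfy $(A_j-A_j)\cap S=\varnothing$ for every $j$; the goal is to manufacture a Bohr neighborhood $B$ of $0$ with $S\cap B=\varnothing$, which shows $S$ is not a set of Bohr recurrence. First I would reduce to the single group $\mathbb Z^{\omega}$. Writing $G$ as a quotient $\pi\colon\mathbb Z^{\omega}\to G$ (every countable abelian group is a quotient of the free abelian group of countable rank), the pullback $\pi^{-1}(S)$ inherits both features. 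The coloring $f\circ\pi$ shows $\pi^{-1}(S)$ is not a set of chromatic recurrence. And $\pi^{-1}(S)$ remains a set of Bohr recurrence: given characters $\chi_1,\dots,\chi_d$ of $\mathbb Z^{\omega}$, the integer combinations $\sum_j m_j\chi_j$ that are trivial on $\ker\pi$ descend to characters of $G$, which Bohr recurrence of $S$ makes small at some $s\in S$; this places $(\chi_1(x_0),\dots,\chi_d(x_0))$ near the closed subgroup $\overline{\{(\chi_1(k),\dots,\chi_d(k)):k\in\ker\pi\}}\le\mathbb T^{d}$ for any preimage $x_0$ of $s$, and subtracting a suitable $k\in\ker\pi$ makes every $\chi_j(x_0-k)$ small while keeping $\pi(x_0-k)=s\in S$. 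This annihilator-and-slack step is the mechanism behind part (ii) of the abstract.

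Next I would invoke the sharper half of part (ii) (Proposition \ref{prop:RelativeKatznelson4Diff}) to replace the arbitrary pullback by a set supported on the structured difference set $(E-E)-(E-E)$, where $E$ is the standard basis of $\mathbb Z^{\omega}$: by spreading the support of the set across fresh basis vectors, membership can be encoded by patterns of the form $\mb e_i-\mb e_j-\mb e_k+\mb e_l$ without disturbing either recurrence property. At this point the problem has the combinatorial shape exhibited in the finite-field model of part (iii), which I would use as a guide for the structured $\mathbb Z^{\omega}$ computation.

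In the $\mathbb F_p^{\omega}$ model the analytic condition becomes transparent. A set $S=\{\sum_{i\in F}\mb e_i:F\in\mathcal F\}$ attached to a $p$-uniform hypergraph $\mathcal F$ on $\mathbb N$ has the feature that its Bohr recurrence is purely combinatorial: every character takes values in the $p$-th roots of unity—and characters of $\mathbb F_p^{\omega}=\bigoplus_{\mathbb N}\mathbb Z/p\mathbb Z$ are indexed by \emph{arbitrary} elements of the full product $\prod_{\mathbb N}\mathbb Z/p\mathbb Z$, so a finite family of them is exactly an unrestricted coloring $c\colon\mathbb N\to\mathbb F_p^{d}$—whence Bohr recurrence says precisely that for every such $c$ some edge $F\in\mathcal F$ is zero-sum, $\sum_{i\in F}c(i)=0$. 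For $p=2$ this is exactly the statement that $\mathcal F$ has infinite chromatic number, while chromatic recurrence of $S$ is exactly the assertion that the Cayley graph $\Cay(\mathbb F_p^{\omega},S)$ has infinite chromatic number. Thus the conjecture reduces to the implication quoted in (iii): infinite chromatic number of the hypergraph forces infinite chromatic number of the associated Cayley graph.

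The main obstacle is exactly this last implication, and it is where Conjecture \ref{conj:KatznelsonGeneral} remains genuinely open. Passing from a hypergraph—or, after part (ii), from a bare finite coloring—to the Cayley graph introduces a great deal of $\mathbb F_p$-linear structure, and the danger is that this structure admits a clever finite proper coloring of $\Cay(\mathbb F_p^{\omega},S)$ that no coloring of the vertex set suggests, so that chromatic recurrence fails while Bohr recurrence persists. The deeper conceptual gap is that chromatic recurrence is a finite-coloring property whereas Bohr recurrence is harmonic-analytic, and Kriz's construction \cite{Kriz} separating chromatic recurrence from \emph{density} recurrence already shows that combinatorial colorability can diverge sharply from analytic largeness. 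Any proof must therefore exploit the finite-coloring hypothesis in a way strictly finer than a density increment, and it is precisely the absence of any known mechanism converting a finite proper coloring into a genuine Bohr neighborhood avoiding $S$ that leaves the conjecture unresolved: the reductions above bring the problem to, but do not cross, that gap.
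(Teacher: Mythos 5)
The statement you set out to prove is Conjecture \ref{conj:KatznelsonGeneral}, and the paper does not prove it: it is stated there explicitly as an open problem (``There is no countably infinite abelian group $G$ where the answer is known''), and the paper's actual results are special cases and reductions. Your proposal is likewise not a proof, as you concede in your final paragraph: after the reductions, the implication ``infinite chromatic number of the hypergraph forces infinite chromatic number of the Cayley graph'' is exactly where the problem remains open, and nothing in your argument crosses that gap. So the honest assessment is that you have correctly recognized the statement as open and have re-derived, in outline, the paper's own reduction scheme (Proposition \ref{prop:RelativeKatznelson4Diff}, and for the finite-field model Proposition \ref{prop:KEpImpliesKFp} together with Lemma \ref{lem:EpContinuity}); you have not proved the conjecture, and neither does the paper.

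Two substantive corrections to the reductions themselves. First, your ``annihilator-and-slack'' step shows only that the full pullback $\pi^{-1}(S)$ is Bohr recurrent in $\mathbb Z^{\omega}$; it does not show that $\pi^{-1}(S)\cap\Delta_2(\mathcal E_1)$ is Bohr recurrent, which is what is actually needed to land inside $(E-E)-(E-E)$, and the element $x_0-k$ your argument produces has no reason to lie in $\Delta_2(\mathcal E_1)$. The paper's Lemma \ref{lem:LiftToE4} achieves this stronger conclusion via the Bogoliouboff--F{\o}lner theorem (Theorem \ref{thm:Bogoliouboff}): cover $\mathbb T^d$ by sets of small diameter, pull back to a cover of $\mathcal E_1$, push forward to a cover of $G$, and use that some $\Delta_2(A_j)$ is a Bohr neighborhood of $0$. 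That theorem is the essential ingredient of the reduction and never appears in your proposal; ``spreading the support across fresh basis vectors'' is not a substitute for it. Second, the general conjecture does not reduce to the $\mathbb F_p^{\omega}$ hypergraph statement of part (iii): that equivalence (Proposition \ref{prop:KEpImpliesKFp} plus Lemma \ref{lem:EpContinuity}) concerns only the group $\mathbb F_p^{\omega}$, where Bohr neighborhoods are simply sets containing finite-index subgroups. The reduction target for an arbitrary countable abelian $G$ is Conjecture \ref{conj:SpecialKatznelson4Z} (subsets of $\Delta_2(\mathcal E_1)$ in $\mathbb Z^{\omega}$), which is a different, and still open, statement; the finite-field model is an analogy, not a way station on the route from $\mathbb Z^{\omega}$ to the general case.
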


\subsection{Measurable recurrence and topological recurrence}

In abelian groups, the sets of chromatic recurrence are precisely the sets of topological recurrence.  Likewise, sets of density recurrence are sets of measurable recurrence and vice versa. Since this article never addresses dynamical systems per se, we use the terminology which is more closely aligned with our definitions. See \cite{BergelsonMcCutcheon} for a general discussion of the equivalences between various recurrence properties.

\subsection{Outline}  Our first result is Proposition \ref{prop:RelativeKatznelson2Diff}, which says that if $E$ belongs to a certain class of subsets of $G$, then every subset $S\subseteq  E-E$ which is a set of Bohr recurrence is also a set of chromatic recurrence.  A special case of this proposition for $G=\mathbb Z$ says that if $S\subseteq  \{2^n-2^m: m, n\in \mathbb N\}$ is a set of Bohr recurrence, then $S$ is a set of chromatic recurrence. This is in contrast to Kriz's construction \cite{Kriz}, which proves that if $E\subseteq  \mathbb Z$ is infinite, then there is a set $S\subseteq  E-E$ which is a set of chromatic recurrence and not a set of density recurrence.  Kriz \cite{Kriz} only stated that there is a subset of $\mathbb Z$ which is a set of chromatic recurrence and not of density recurrence, but minor modifications yield the more general result. The article \cite{KrizInDiff} proves the generalization explicitly.

In \S\ref{sec:Reduction} we state Conjecture \ref{conj:SpecialKatznelson4Z}, the special case of Conjecture \ref{conj:KatznelsonGeneral} where $G$ is the direct sum of countably many copies of $\mathbb Z$ and $S$ is contained in $(\mathcal E_1-\mathcal E_1)-(\mathcal E_1-\mathcal E_1)$, where $\mathcal E_1$ is the standard basis of $G$. Proposition \ref{prop:RelativeKatznelson4Diff} shows that this special case implies the full conjecture.  Conjecture \ref{conj:SpecialKatznelson4Z} is superficially similar to Proposition \ref{prop:RelativeKatznelson2Diff}, but we show in \S\ref{sec:DashHopes} that it is not susceptible to the same proof.

In \S\ref{sec:SpecialCases} we consider the special case of Conjecture \ref{conj:KatznelsonGeneral} where the ambient group is a vector space over a finite field of odd characteristic.  Proposition \ref{prop:KEpImpliesKFp} and Lemma \ref{lem:EpContinuity} show that in this setting, Conjecture \ref{conj:KatznelsonGeneral} can be reduced to the study of an easy-to-describe class of Bohr recurrent sets, first considered in \cite{GivensThesis} and \cite{GivensKunen} as a means to distinguish Bohr topologies on various groups.

Two problems suggested by Propositions \ref{prop:RelativeKatznelson2Diff} and \ref{prop:RelativeKatznelson4Diff} are stated in \S\ref{sec:Suggestions}.  In \S\ref{sec:Folner} we explain how Theorem \ref{thm:Bogoliouboff}, a seemingly stronger version of Bogoliouboff and F{\o}lner's theorem on iterated difference sets, actually follows from F{\o}lner's original proof.

\subsection{Previous work}   Conjecture \ref{conj:KatznelsonGeneral} has been studied  in \cite{GlasnerMinimal}, \cite{BoshGlasner},   \cite{GrivauxRoginskaya}, \cite{GKR}, and \cite{HostKraMaass}.  The exposition in \cite{GKR} thoroughly summarizes the history of the problem.

\subsection{Acknowledgements} We thank Anh Le for suggesting the proof in Remark \ref{rem:WeakI0}.  An anonymous referee contributed many corrections and improvements.

\section{Subsets of difference sets}\label{sec:SminusS}

If $G$ is an abelian group, we say $\psi:G\to \mathbb C$ is \emph{(uniformly) almost periodic} if $\psi$ is a uniform limit of trigonometric polynomials.

\begin{definition}\label{def:I0}
   If $G$ is an abelian group and $E\subseteq  G$, we say that $E$ is an \emph{$I_0$ set} if for every bounded  $f:E\to \mathbb C$, there is an almost periodic $\psi:G\to \mathbb C$ such that $\psi|_{E}=f$.
\end{definition}
See \cite{GrahamHareBook}, \cite{KunenRudin}, or \cite{LeInterpolation} for an overview of $I_0$ sets.  An early result \cite{Strzelecki} says that if  $A = \{a_1<a_2<\dots \}\subseteq  \mathbb Z$ is lacunary (meaning $\liminf_{n\to\infty} a_{n+1}/a_n>1$), then $A$ is an $I_0$ set.  Every infinite abelian group contains an $I_0$ set of the same cardinality as the group (see \cite{KunenRudin}).

\begin{definition}\label{def:Independent} A set $E\subseteq  G$ is \emph{independent} if for all $d\in\mathbb N$ and all $e_1,\dots,e_d\in E$, the only integer solutions $n_i$ to the equation $n_1e_1+\cdots +n_de_d=0$ satisfy $n_1e_1=\cdots = n_de_d=0$.
\end{definition}
  Independent sets are proved to be $I_0$ sets in Corollary 3.3 of \cite{KunenRudin}.  Examples of independent sets include linearly independent subsets of vector spaces.

\begin{proposition}\label{prop:RelativeKatznelson2Diff} Let $G$ be a countable abelian group and let $E\subseteq  G$ be an $I_0$ set.  If $S\subseteq  E-E$ and $S$ is a set of Bohr recurrence, then $S$ is a set of chromatic recurrence.
\end{proposition}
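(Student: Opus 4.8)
The plan is to prove the contrapositive: if $S$ is not a set of chromatic recurrence, then $S$ is not a set of Bohr recurrence. So I would begin by assuming there is an $r\in\mathbb N$ and a coloring $c:G\to\{1,\dots,r\}$ whose color classes $A_j:=c^{-1}(j)$ all satisfy $(A_j-A_j)\cap S=\varnothing$. For each $s\in S$, fix one representation $s=e_s-e_s'$ with $e_s,e_s'\in E$ (possible since $S\subseteq E-E$). The first observation is that the endpoints must receive different colors: if $c(e_s)=c(e_s')=j$, then $e_s,e_s'\in A_j$ and $s=e_s-e_s'\in(A_j-A_j)\cap S$, contradicting the assumption. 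Thus $c(e_s)\neq c(e_s')$ for every $s\in S$.

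Next I would convert this combinatorial separation into an analytic one. Let $\omega=\exp(2\pi i/r)$ and define $f:E\to\mathcal S^1$ by $f(e)=\omega^{c(e)}$. Since distinct $r$-th roots of unity lie at distance at least $\delta:=2\sin(\pi/r)>0$, the inequality $c(e_s)\neq c(e_s')$ gives $|f(e_s)-f(e_s')|\geq\delta$ for all $s\in S$. This is exactly where the hypothesis that $E$ is an $I_0$ set enters: by Definition \ref{def:I0} the bounded function $f$ extends to an almost periodic $\psi:G\to\mathbb C$ with $\psi|_E=f$. Choosing a trigonometric polynomial $q=\sum_{k=1}^d c_k\chi_k$ with $\sup_{x\in G}|\psi(x)-q(x)|<\delta/4$ and using $\psi|_E=f$, the triangle inequality yields $|q(e_s)-q(e_s')|\geq\delta/2$ for every $s\in S$.

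Finally I would relate this to the characters $\chi_k$ evaluated on $S$ itself. For each $k$, since $|\chi_k(e_s')|=1$ and $\chi_k(e_s)=\chi_k(e_s')\chi_k(s)$, we have $|\chi_k(e_s)-\chi_k(e_s')|=|\chi_k(s)-1|$. Hence, with $C:=\sum_{k=1}^d|c_k|$,
\[
\tfrac{\delta}{2}\le|q(e_s)-q(e_s')|\le\sum_{k=1}^d|c_k|\,|\chi_k(e_s)-\chi_k(e_s')|=\sum_{k=1}^d|c_k|\,|\chi_k(s)-1|\le C\max_{k\le d}|\chi_k(s)-1|,
\]
where $C>0$ since otherwise $q\equiv 0$. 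Thus $\max_{k\le d}|\chi_k(s)-1|\ge\varepsilon$ for all $s\in S$, with $\varepsilon:=\delta/(2C)>0$. Taking the finite character set $\{\chi_1,\dots,\chi_d\}$ and this $\varepsilon$ in the definition of Bohr recurrence shows that no $s\in S$ satisfies $\max_k|\chi_k(s)-1|<\varepsilon$, so $S$ is not a set of Bohr recurrence, as desired.

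The argument is short once arranged this way, and the step I expect to carry the weight is the passage from the coloring to the characters: the $I_0$ hypothesis is precisely the tool that turns an arbitrary finite coloring of $E$ into a genuine almost periodic function on $G$, and hence (after uniform approximation) into a \emph{fixed finite} list of characters witnessing a \emph{uniform} lower bound on $\max_k|\chi_k(s)-1|$ across all of $S$. I would note that the non-uniqueness of the representation $s=e_s-e_s'$ causes no trouble: one simply fixes a single representation per element of $S$, and every estimate above uses only that chosen representation.
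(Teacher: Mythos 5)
Your proof is correct, and it is essentially the paper's argument run in contrapositive: the same ingredients appear --- the $I_0$ extension of a (root-of-unity version of the) coloring to an almost periodic function, uniform approximation by a trigonometric polynomial, and the identity $|\chi_k(e_s)-\chi_k(e_s')|=|\chi_k(s)-1|$ --- which the paper instead packages as Lemma \ref{lem:BohrRec} and applies directly, using Bohr recurrence of $S$ to find $s\in S$ with $|\psi(x+s)-\psi(x)|<1$ for all $x\in G$ and then reading off equal colors at the endpoints of $s=b-a$. The differences (direct vs.\ contrapositive, integer color values vs.\ roots of unity) are organizational rather than mathematical.
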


We prove Proposition \ref{prop:RelativeKatznelson2Diff} after  the following lemma.
\begin{lemma}\label{lem:BohrRec}
  Let $G$ be an abelian group.  The following are equivalent.
  \begin{enumerate}
    \item\label{item:BohrRecDef} $S$ is a set of Bohr recurrence.

    \item\label{item:SmallPoly}  For every almost periodic $\psi:G\to \mathbb C$ and all $\varepsilon>0$, there exists $s\in S$ such that $|\psi(g+s)-\psi(g)|<\varepsilon$ for all $g\in G$.
  \end{enumerate}
\end{lemma}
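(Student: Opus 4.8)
The plan is to prove the equivalence of the two characterizations of Bohr recurrence by showing each implication separately, with the bulk of the work lying in deriving the "small translate" condition~\eqref{item:SmallPoly} from the definition~\eqref{item:BohrRecDef}.

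\emph{Proof that \eqref{item:SmallPoly} implies \eqref{item:BohrRecDef}.} This direction is the easy one. Suppose $S$ satisfies the translate condition. Given a finite set of characters $\chi_1,\dots,\chi_d$ and $\varepsilon>0$, I would apply the hypothesis to the almost periodic function $\psi = \chi_j$ for each $j$ (in fact one can combine them). Evaluating $|\psi(g+s)-\psi(g)|<\varepsilon$ at $g=0$ gives $|\chi_j(s)-\chi_j(0)| = |\chi_j(s)-1|<\varepsilon$, which is exactly the condition in the second formulation of Bohr recurrence in Definition~\ref{def:Recurrence}. Since characters are themselves trigonometric polynomials and hence almost periodic, this is immediate.

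\emph{Proof that \eqref{item:BohrRecDef} implies \eqref{item:SmallPoly}.} This is where the real content lies. First I would reduce from arbitrary almost periodic $\psi$ to trigonometric polynomials: if $\psi$ is a uniform limit of trigonometric polynomials, choose a trigonometric polynomial $q$ with $\|\psi-q\|_\infty<\varepsilon/3$, so that it suffices to make $|q(g+s)-q(g)|$ small uniformly in $g$, absorbing the approximation error. Writing $q = \sum_{j=1}^d c_j\chi_j$ as a linear combination of characters $\chi_j$, I would use the homomorphism property $\chi_j(g+s)=\chi_j(g)\chi_j(s)$ to obtain
\[
q(g+s)-q(g) = \sum_{j=1}^d c_j\chi_j(g)\bigl(\chi_j(s)-1\bigr).
\]
Taking absolute values and using $|\chi_j(g)|=1$ gives the bound $|q(g+s)-q(g)|\le \sum_{j=1}^d |c_j|\,|\chi_j(s)-1|$, which is uniform in $g$. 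Now apply the definition of Bohr recurrence to the finite character set $\{\chi_1,\dots,\chi_d\}$ with tolerance $\varepsilon' = \varepsilon/(3\sum_j|c_j|+1)$ to produce $s\in S$ with $\max_j|\chi_j(s)-1|<\varepsilon'$, making the whole sum less than $\varepsilon/3$.

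\emph{Main obstacle.} The only genuine subtlety is the uniformity in $g$: the crucial observation is that for a \emph{trigonometric polynomial} the difference $q(g+s)-q(g)$ factors so that the $g$-dependence is confined to unimodular factors $\chi_j(g)$, so smallness of the finitely many quantities $|\chi_j(s)-1|$ forces smallness of the translate difference \emph{simultaneously for all} $g$. For a general almost periodic function this would fail without the passage to trigonometric polynomials, so the approximation step is essential and must be arranged so that the $\varepsilon/3$ budgeting closes. No convergence or measure-theoretic issues arise because the reduction is purely finitary once the trigonometric polynomial approximation is fixed.
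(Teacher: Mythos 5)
Your proof of the direction (\ref{item:BohrRecDef}) $\Rightarrow$ (\ref{item:SmallPoly}) is correct and is essentially the paper's own argument: reduce to trigonometric polynomials by uniform approximation, factor $q(g+s)-q(g)=\sum_{j}c_j\chi_j(g)\bigl(\chi_j(s)-1\bigr)$ so that the $g$-dependence is confined to unimodular factors, and apply Bohr recurrence to the finitely many characters with a shrunken tolerance; the explicit $\varepsilon/3$ bookkeeping you add is a cosmetic difference from the paper, which simply asserts the reduction. Note, however, that the paper proves \emph{only} this direction, explicitly leaving (\ref{item:SmallPoly}) $\Rightarrow$ (\ref{item:BohrRecDef}) as an exercise because it is never used.

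Your sketch of (\ref{item:SmallPoly}) $\Rightarrow$ (\ref{item:BohrRecDef}) has a genuine gap. Applying the hypothesis to $\psi=\chi_j$ separately for each $j$ produces a possibly different witness $s_j\in S$ for each $j$, whereas Bohr recurrence demands a \emph{single} $s\in S$ with $\max_j|\chi_j(s)-1|<\varepsilon$. The parenthetical ``in fact one can combine them'' is precisely the nontrivial point, and the obvious combination $\psi=\chi_1+\cdots+\chi_d$ does not work immediately: evaluating at $g=0$ only bounds $\bigl|\sum_j(\chi_j(s)-1)\bigr|$, and a priori the summands could cancel. The standard repair uses the geometry of the unit circle to rule out cancellation: apply the hypothesis to $\psi=\chi_1+\cdots+\chi_d$ with tolerance $\delta=\varepsilon^2/2$ and evaluate at $g=0$; since $\mathrm{Re}\bigl(1-\chi_j(s)\bigr)\geq 0$ for every $j$, the real parts cannot cancel, so each satisfies $\mathrm{Re}\bigl(1-\chi_j(s)\bigr)<\delta$, and then $|1-\chi_j(s)|^2=2\,\mathrm{Re}\bigl(1-\chi_j(s)\bigr)<2\delta=\varepsilon^2$, giving $\max_j|\chi_j(s)-1|<\varepsilon$ for the single $s$ produced. (Alternatively one can exploit the uniformity in $g$ together with orthogonality of distinct characters under an invariant mean, but the real-part argument is more elementary.) Since the paper itself omits this direction, the gap does not touch anything the paper relies on, but as written your proposal does not establish the stated equivalence.
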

\begin{proof}
To prove that (\ref{item:BohrRecDef}) implies (\ref{item:SmallPoly}), it suffices to prove that (\ref{item:BohrRecDef}) implies the special case of (\ref{item:SmallPoly}) where $\psi$ is a trigonometric polynomial $p$, since every almost periodic function may be uniformly approximated by such $p$.  To prove this special case, assume $S$ is a set of Bohr recurrence, $p=\sum_{j=1}^d c_j\chi_j$ is a trigonometric polynomial on $G$, and $\varepsilon>0$.  Let $M=\max_{j\leq d}|c_j|$ and let $\varepsilon'=\varepsilon/(Md+1)$.  Since $S$ is a set of Bohr recurrence, we may choose $s\in S$ such that $|\chi_j(s)-1|<\varepsilon'$ for all $j\leq d$. Expanding $|p(g+s)-p(g)|$ we get
  \begin{align*}
    \Bigl|\sum_{j=1}^d c_j\chi_j(g+s)-c_j\chi_j(g)\Bigr|&\leq \sum_{j=1}^d |c_j||\chi_j(g+s)-\chi_j(g)|\\
    &= \sum_{j=1}^d |c_j||\chi_j(s)-1|\\
    &\leq  dM\varepsilon'\\
    &< \varepsilon.
  \end{align*}
We will not need the fact that (\ref{item:SmallPoly}) implies (\ref{item:BohrRecDef}), so we leave its proof as an exercise. \end{proof}

\begin{proof}[Proof of Proposition \ref{prop:RelativeKatznelson2Diff}]
  Fixing $G$, $E$, and $S$ in the hypothesis, it suffices to prove that for all $r\in \mathbb N$ and every $f:G\to \{1, \dots, r\}$, there exists $a, b\in G$  such that $f(b)=f(a)$ and $b-a \in S$.  So fix $r\in \mathbb N$ and $f:G\to \{1,\dots,r\}$.  Since $E$ is an $I_0$ set, we may choose an almost periodic $\psi:G\to \mathbb C$ such that $f(x)=\psi(x)$ for all $x\in E$.  By Lemma \ref{lem:BohrRec}, there is a $y\in S$ such that
  \begin{equation}\label{eqn:puniform}
  |\psi(x+y)-\psi(x)|<1 \quad \text{for all } x\in G.
   \end{equation}
   Since $S\subseteq  E-E$, we can write $y$ as $b-a$, where $a,b\in E$. Setting $x=a$ in (\ref{eqn:puniform}), we get $|\psi(b)-\psi(a)|<1$.  Then
  \[
  |f(b)-f(a)| = |\psi(b)-\psi(a)|<1.
   \]
   Since $f$ is integer valued, this implies $f(b)=f(a)$. Our choice of $a$ and $b$ implies $b-a\in S$, so this proves that $S$ is a set of chromatic recurrence.  \end{proof}

To see that Proposition \ref{prop:RelativeKatznelson2Diff} is not vacuous, we mention the following classical result, which is essentially the Poincar\'e recurrence theorem; see \cite{FurstenbergBook}, \cite{BergelsonMultifarious}, or \cite{FrantzMcCutcheon} for exposition.
\begin{theorem}\label{thm:Poincare} Let $G$ be a countable abelian group.  If $E$ is an infinite subset of $G$, then $\{a-b: a\neq b\in E\}$ is a set of density recurrence (and therefore a set of chromatic recurrence and a set of Bohr recurrence).
\end{theorem}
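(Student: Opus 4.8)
The plan is to prove only the density recurrence assertion, since the two parenthetical conclusions then follow from the implications recorded earlier in the text (every set of density recurrence is a set of chromatic recurrence, and every set of chromatic recurrence is a set of Bohr recurrence). So I would fix $A\subseteq G$ with $d^*(A)>0$ and aim to produce distinct $a,b\in E$ together with $a',b'\in A$ satisfying $a'-b'=a-b$; this exhibits a nonzero element of $(A-A)\cap\{a-b:a\neq b\in E\}$, which is exactly what density recurrence of this set requires.

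First I would unwind the definition of $d^*(A)$. Since $d^*(A)$ is the supremum of $\lambda(1_A)$ over invariant means $\lambda$ on $l^\infty(G)$, its positivity furnishes an invariant mean $\lambda$ with $\delta:=\lambda(1_A)>0$. The key structural input is translation invariance: for each $e\in E$ the indicator $1_{A-e}$ is the translate $g\mapsto 1_A(g+e)$ of $1_A$, so invariance of $\lambda$ gives $\lambda(1_{A-e})=\lambda(1_A)=\delta$ for every $e\in E$.

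Next comes the pigeonhole step, which is the heart of the argument. I would choose $N\in\mathbb N$ with $N\delta>1$ and pick $N$ distinct elements $e_1,\dots,e_N\in E$, possible because $E$ is infinite. By linearity, $\lambda\bigl(\sum_{i=1}^N 1_{A-e_i}\bigr)=N\delta>1=\lambda(\mathbf{1})$. Since a mean satisfies $\lambda(f)\leq \sup_{g}f(g)$, the function $\sum_{i=1}^N 1_{A-e_i}$ cannot be bounded above by $1$; hence there is a point $g\in G$ lying in at least two of the sets $A-e_i$, say $g\in (A-e_i)\cap(A-e_j)$ with $i\neq j$. Then $a':=g+e_i$ and $b':=g+e_j$ both lie in $A$, and $a'-b'=e_i-e_j$, a nonzero element of $\{a-b:a\neq b\in E\}$ because $e_i\neq e_j$. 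Thus $(A-A)$ meets this difference set, completing the proof.

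I do not expect a genuine obstacle here: the whole argument is the Poincar\'e recurrence theorem dressed as a counting estimate. The only points that need care are the verification that the relevant indicators are honest translates of $1_A$ (so that invariance applies) and the observation that a positive linear functional with $\lambda(\mathbf{1})=1$ forces any function of mean exceeding $1$ to exceed $1$ somewhere --- which is precisely what produces the double-covered point. If one preferred to avoid invariant means entirely, the same bookkeeping can be carried out along a F\o{}lner sequence, but the mean-based formulation keeps the pigeonhole step transparent.
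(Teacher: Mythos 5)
Your proof is correct. Note that the paper does not actually prove this statement: it is presented as a classical result (``essentially the Poincar\'e recurrence theorem'') with citations to the literature, so there is no internal proof to compare against. Your argument --- extract an invariant mean $\lambda$ with $\lambda(1_A)=\delta>0$, use translation invariance to give each $1_{A-e_i}$ the same mean, and then pigeonhole via $\lambda\bigl(\sum_{i=1}^N 1_{A-e_i}\bigr)=N\delta>1\geq$ the mean of any function bounded by $1$ --- is exactly the standard proof found in the cited expositions, and every step checks out: the positivity of $d^*(A)$ does furnish such a $\lambda$ (the supremum need not be attained, but only $\lambda(1_A)>0$ is needed), the functions $1_{A-e_i}$ are honest translates of $1_A$, and since $\sum_i 1_{A-e_i}$ is integer-valued, exceeding $1$ at some point forces two of the sets $A-e_i$, $A-e_j$ to share a point $g$, giving $e_i-e_j=(g+e_i)-(g+e_j)\in (A-A)\setminus\{0\}$ with $e_i\neq e_j\in E$, as required.
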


\begin{remark}
  If one could find an $I_0$ set $E$ such that $G=E-E$, then Conjecture \ref{conj:KatznelsonGeneral} would be a corollary of Proposition \ref{prop:RelativeKatznelson2Diff}.  But when $G$ has infinite cardinality, the difference set of an $I_0$ set is never all of $G$, as we prove in Lemma \ref{lem:I0minusI0}.
\end{remark}

The following definition will be useful in Remark \ref{rem:WeakI0} and Lemma \ref{lem:I0minusI0}; see \cite{RudinGroupsBook} for exposition.  

\begin{definition}
The \emph{Bohr compactification} of a discrete abelian group $G$ is a compact abelian group $bG$ with an injective homomorphism $\iota: G\to bG$ such that $\iota(G)$ is topologically dense in $bG$, and every character $\chi\in \widehat{G}$ has the form $\chi'\circ \iota$ for some $\chi'\in \widehat{bG}$.
\end{definition}
By convention, we identify $G$ with its image $\iota(G)$ in $bG$, and say that every character of $G$ is the restriction of a character of $bG$.  We can thereby speak of the ``closure of $A$ in $bG$'' when $A\subseteq G$.
\begin{remark}\label{rem:WeakI0}
  The proof of Proposition \ref{prop:RelativeKatznelson2Diff} is easily modified to prove the analogous statement with the following ostensibly weaker hypothesis on $E$:
    \begin{center} ($*$) \begin{minipage}{0.9\textwidth} Every bounded $f:E\to \mathbb C$ can be uniformly approximated by a trigonometric polynomial restricted to $E$.\end{minipage}
      \end{center} The hypothesis ($*$) implies that $E$ is an $I_0$ set, so we would obtain no greater generality by using it in Proposition \ref{prop:RelativeKatznelson2Diff}.  To prove this implication, note that ($*$) implies that for any two disjoint subsets $E', E''\subseteq  E$ and all $\varepsilon>0$, there is a trigonometric polynomial $p$ with $|p(x)-1|<\varepsilon$ for all $x\in E'$ and $|p(x)-0|<\varepsilon$ for all $x\in E''$. This means that $E'$ and $E''$ have disjoint closures in the Bohr compactification of $G$. Since this applies to arbitrary disjoint subsets $E'$, $E'' \subseteq E$, Proposition 3.4.1 of \cite{GrahamHareBook} now implies $E$ is an $I_0$ set.
\end{remark}

\begin{remark}
 It is tempting to interpret Proposition \ref{prop:RelativeKatznelson2Diff} as saying ``When $E$ is an $I_0$ set, it is easy to prove that a subset of $E-E$ is a set of chromatic recurrence: just prove that it is Bohr recurrent.''  Upon reflection, it seems more correct to say that ``it is difficult to prove that a subset of $E-E$ is a set of Bohr recurrence: to do so one must prove that it is a set of chromatic recurrence.''
\end{remark}

One may wonder if Proposition \ref{prop:RelativeKatznelson2Diff} can be improved to conclude that $S$ is a set of density recurrence.  But it cannot, as the following proposition shows.
\begin{proposition}[\cite{KrizInDiff}, Theorem 1.2]\label{prop:KrizInDiff}
  Let $E\subseteq  \mathbb Z$ be infinite.  There is a set $S\subseteq  E-E$ such that $S$ is a set of chromatic recurrence and not a set of density recurrence.
\end{proposition}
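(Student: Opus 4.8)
The plan is to carry out Kriz's construction \cite{Kriz} with the one change that the rapidly increasing ``base sequence'' it requires is extracted from $E$; Kriz's argument uses this sequence only through its growth rate, so this change costs nothing. Since $E$ is infinite it is unbounded in at least one direction, and, replacing $E$ by $-E$ if necessary (if $S\subseteq(-E)-(-E)$ has the desired properties then so does $-S\subseteq E-E$, as both recurrence notions are invariant under $x\mapsto-x$), I may assume $E$ is unbounded above. Choose $b_1<b_2<\cdots$ in $E$ with $b_{k+1}>2(|b_1|+\cdots+|b_k|)$; then $B=\{b_k:k\in\mathbb N\}$ is dissociated, and in particular the differences $b_i-b_j$ with $i\neq j$ are pairwise distinct. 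I will build $S$ as a union $\bigcup_k T_k$, where each $T_k$ is a symmetric set of differences of elements of a finite chunk $V_k\subseteq B$ and the chunks $V_k$ partition $B$. Then $S\subseteq B-B\subseteq E-E$, and it remains only to arrange the two recurrence properties.

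For chromatic recurrence I would take, via the Frankl--R\"odl theorem, finite graphs $G_k$ with $\chi(G_k)>k$, and realize each as a difference graph: identify the vertex set of $G_k$ with a chunk $V_k\subseteq B$ of the right size and set $T_k=\{\pm(a-b):\{a,b\}\in E(G_k)\}$. Because the differences of $B$ are pairwise distinct and the chunks are disjoint, for $a,b\in V_k$ one has $a-b\in S$ if and only if $\{a,b\}\in E(G_k)$: an equation $a-b=a'-b'$ with $a',b'$ in some chunk forces $\{a,b\}=\{a',b'\}$ and hence that chunk to be $V_k$. Thus $S$ induces exactly $G_k$ on $V_k$. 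Now given $r\in\mathbb N$ and $f:\mathbb Z\to\{1,\dots,r\}$, restrict $f$ to $V_r$: this is an $r$-coloring of $G_r$, and since $\chi(G_r)>r$ some edge $\{a,b\}$ is monochromatic, giving $f(a)=f(b)$ and $a-b\in T_r\subseteq S$. So $S$ is a set of chromatic recurrence.

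The difficult half is to show that $S$ is not a set of density recurrence, i.e.\ to produce $A\subseteq\mathbb Z$ with $d^*(A)>0$ and $(A-A)\cap S=\varnothing$; I expect this to be the main obstacle. The set $A$ is built scale by scale. Because $B$ is dissociated and grows rapidly, the elements of $S$ concentrate at the widely separated scales $b_k$, so on a long interval only the blocks below the current scale restrict the admissible differences, and those differences---being differences of a rapidly increasing set---form a sparse, well-structured subset of $\mathbb Z$. One retains a positive proportion of each such interval while avoiding all these differences; the delicate point, and the combinatorial heart of Kriz's construction, is to bound this proportion below by a single constant as the scale grows, even though $\chi(G_k)\to\infty$. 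This is where the fine structure of the Frankl--R\"odl graphs, and not merely the size of their chromatic numbers, enters, and it has no elementary substitute. Since Kriz's verification that $d^*(A)>0$ and $(A-A)\cap S=\varnothing$ invokes the base sequence only through its growth, and $B$ may be taken to grow as fast as one likes, that verification applies verbatim, yielding $S\subseteq E-E$ with both required properties, as in \cite{KrizInDiff}.
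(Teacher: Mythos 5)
The first thing to note is that the paper does not prove this proposition at all: it is quoted verbatim from \cite{KrizInDiff}, Theorem 1.2, and the entire content of that citation is exactly the half of the argument you defer. Your skeleton is fine as far as it goes: extracting a rapidly growing dissociated $B=\{b_k\}\subseteq E$ (after replacing $E$ by $-E$ if necessary), partitioning it into chunks $V_k$, and using distinctness of pairwise differences to see that $\Cay(\mathbb Z,S)$ induces exactly $G_k$ on $V_k$; the chromatic-recurrence half then works as you say. The genuine gap is the claim that Kriz's verification of non-density-recurrence ``applies verbatim'' because it ``uses the base sequence only through its growth rate.'' That claim is false. In Kriz's construction \cite{Kriz} the vertices of the high-chromatic (Kneser-type) graphs are $r$-element \emph{subsets} $F$ of a finite ground set, realized in $\mathbb Z$ as subset-sums $\sum_{i\in F}b_i$; the edges are differences of such sums, hence $\pm$-combinations of many $b_i$'s, which in general do \emph{not} lie in $E-E$. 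The positive-density avoiding set $A$ is then manufactured by a character/torus argument that reads off the coefficient pattern of those subset-sums and transfers large independent sets of the Kneser graph into $\mathbb Z$. Your one-element-per-vertex realization is precisely what forces $S\subseteq E-E$, but it discards the coordinate structure that Kriz's density argument runs on, so there is nothing for his verification to apply ``verbatim'' to. Carrying out the density argument for a realization living inside $E-E$ is exactly the nontrivial content of \cite{KrizInDiff}; it is not a free consequence of taking $B$ to grow fast.

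Moreover, the gap is not merely expository: the data your proposal actually specifies cannot suffice. You only require that the $G_k$ be finite graphs with $\chi(G_k)>k$. Take $G_k=K_{k+1}$, the complete graph. Then $S\supseteq (V_k-V_k)\setminus\{0\}$ for every $k$, and such an $S$ \emph{is} a set of density recurrence: if $d^*(A)=\delta>0$, pick $k\geq 1/\delta$ and an invariant mean $\lambda$ with $\lambda(1_A)=\delta$; the $k+1$ translates $A-v$, $v\in V_k$, each have mean $\delta$ and $(k+1)\delta>1$, so they cannot be pairwise disjoint, giving $v\neq v'\in V_k$ with $v-v'\in (A-A)\cap S$. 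So under your stated hypotheses the desired conclusion is outright false, and any correct proof must choose the graphs for more than their chromatic numbers --- in Kriz's case, unbounded chromatic number together with the bounded fractional chromatic number and Erd\H{o}s--Ko--Rado-type independent-set structure of Kneser graphs (this is where Frankl--R\"odl is really used) --- \emph{and} must couple that structure to an embedding in $\mathbb Z$ compatible with producing the positive-density set. Your proposal concedes that this is the heart of the matter (``no elementary substitute'') yet supplies neither the structure nor the coupling, so the hard half of the proposition remains unproved.
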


In the hypothetical scenario where Conjecture \ref{conj:KatznelsonGeneral} is false, Propositions \ref{prop:RelativeKatznelson2Diff} and \ref{prop:KrizInDiff} demonstrate that the relationship between Bohr recurrence and chromatic recurrence differs from the relationship between chromatic recurrence and measurable recurrence.  Regardless of the status of Conjecture \ref{conj:KatznelsonGeneral}, the following problem seems interesting.
\begin{problem}\label{prob:ExtendI0}
  Extend Proposition \ref{prop:RelativeKatznelson2Diff} to a broader class of sets (broader than difference sets of finite unions of $I_0$ sets).
\end{problem}
The parenthetical remark is included because we believe that extending the result to finite unions of $I_0$ sets will be relatively straightforward, using facts from \cite{GrahamHareBook}, \cite{KunenRudin}, or \cite{LeInterpolation}.

\begin{remark}
  Proposition \ref{prop:RelativeKatznelson2Diff} is similar to Theorem 7.15 of \cite{KunenRudin}, which solves a special case of an open problem about Sidon sets in $\mathbb Z$, under the additional assumption that the set under consideration is contained in the difference set $E-E$ of a lacunary  set $E$.
\end{remark}

The following lemma shows that difference sets of $I_0$ sets are small, and in particular never equal to the entire ambient group.  If $S\subseteq  G$ and $g\in G$, we  say that $g$ is a \emph{Bohr limit point of $S$} if $(S\cap U)\setminus \{g\} \neq \varnothing$ for every Bohr neighborhood $U$ of $g$.  In other words, the closure $\overline{S\setminus \{g\}}$ in the Bohr topology contains $g$.

\begin{lemma}\label{lem:I0minusI0}  If $G$ is an abelian group and $E\subseteq  G$ is an $I_0$ set, then there is a Bohr neighborhood $U$ of $0$ in $G$ such that $0$ is the only Bohr limit point of $(E-E)\cap U$.
\end{lemma}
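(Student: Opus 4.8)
The plan is to pass to the Bohr compactification $bG$ and exploit the strong separation that $I_0$ sets enjoy there. Since $E$ is $I_0$, interpolating the indicator of a subset by an almost periodic function (Definition \ref{def:I0}) shows that disjoint subsets of $E$ have disjoint closures in $bG$, and that each point of $E$ is isolated in $\overline{E}$; moreover, since finite unions of $I_0$ sets are again $I_0$ \cite{GrahamHareBook}, for each $g\in G$ the set $E\cup(E+g)$ is $I_0$, and applying the same interpolation to its subsets yields the identity $\overline{E}\cap\overline{E+g}=\overline{E\cap(E+g)}$. Write $E_g:=E\cap(E+g)$ and $D:=\{g\in G:E_g\text{ is infinite}\}$ (so $0\in D$). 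The lemma will follow from two claims: (1) every nonzero Bohr limit point of $E-E$ lies in $D$; and (2) there is a Bohr neighborhood of $0$ meeting $D$ only in $\{0\}$.

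For (1) I argue contrapositively: assume $E_g$ is finite. Then $\overline{E}\cap\overline{E+g}=\overline{E_g}=E_g\subseteq E$, so any pair $(\xi,\xi')\in\overline{E}\times\overline{E}$ with $\xi-\xi'=g$ satisfies $\xi\in E_g\subseteq E$ and $\xi'=\xi-g\in E$; being a pair of isolated points, it is isolated in $\overline{E}\times\overline{E}$. If $g$ were a Bohr limit point of $E-E$, the net of elements of $(E-E)\setminus\{g\}$ lying in shrinking Bohr neighborhoods of $g$ would converge to $g$; writing each such element as $e-e'$ and taking an ultrafilter limit of the pairs $(e,e')$ in the compact space $\overline{E}\times\overline{E}$ would produce a pair $(\xi,\xi')$ as above with $\xi-\xi'=g$. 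Its isolatedness forces $e-e'=g$ on an ultrafilter-large (in particular nonempty) set, contradicting $e-e'\neq g$. Hence $g\notin D$ implies $g$ is not a Bohr limit point of $E-E$, which is (1).

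Claim (2) is the heart of the proof and the main obstacle, because $bG$ is not metrizable (the dual $\widehat{G}$ is uncountable), so one cannot control all characters simultaneously nor extract convergent sequences from closures for free. Suppose, for contradiction, that $0$ lies in the Bohr closure of $D\setminus\{0\}$. In the setting of this paper $E$ is countable, so $D\setminus\{0\}\subseteq E-E$ is countable; since deleting finitely many points preserves closure membership, $0$ is a genuine cluster point of an enumeration $g_1,g_2,\dots$ of $D\setminus\{0\}$, whence there is a nonprincipal ultrafilter $\mathcal U$ on $\mathbb N$ with $\lim_{\mathcal U}g_k=0$ in $bG$. For each $k$ the set $E_{g_k}$ is infinite, so I may greedily pick a pair $a_k,b_k\in E$ with $a_k-b_k=g_k$, keeping all the chosen $a_k,b_k$ distinct; then $A=\{a_k:k\in\mathbb N\}$ and $B=\{b_k:k\in\mathbb N\}$ are disjoint subsets of $E$. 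Taking ultrafilter limits $\alpha=\lim_{\mathcal U}a_k$ and $\beta=\lim_{\mathcal U}b_k$ in $bG$, continuity of subtraction gives $\alpha-\beta=\lim_{\mathcal U}g_k=0$, so $\alpha=\beta\in\overline{A}\cap\overline{B}$. This contradicts the disjointness of the closures of $A$ and $B$, establishing (2).

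Finally I combine the claims. Using regularity of the compact Hausdorff group $bG$, I shrink the neighborhood from (2) to an open $V\ni0$ with $\overline{V}\cap(D\setminus\{0\})=\varnothing$, and set $U=V\cap G$. A nonzero Bohr limit point $g$ of $(E-E)\cap U$ is a Bohr limit of points of $U\subseteq V$, so $g\in\overline{V}$, and it is a fortiori a Bohr limit point of $E-E$, so $g\in D$ by (1); thus $g\in\overline{V}\cap D=\{0\}$, a contradiction. Hence $0$ is the only Bohr limit point of $(E-E)\cap U$, as required. The uncountable case, which is not needed for the applications here, should be obtainable by replacing the sequential cluster-point step with the uniform interpolation property of $I_0$ sets, which furnishes control over all characters at once.
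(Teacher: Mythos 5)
The fatal step is your appeal to ``finite unions of $I_0$ sets are again $I_0$,'' which is what you use to get that $E\cup(E+g)$ is $I_0$ and hence the identity $\overline{E}\cap\overline{E+g}=\overline{E\cap(E+g)}$ underlying Claim (1). That union statement is false. The classical counterexample is $\{2^n:n\in\mathbb N\}\cup\{2^n+n:n\in\mathbb N\}$: both pieces are lacunary, hence $I_0$, but since every Bohr neighborhood of $0$ in $\mathbb Z$ contains positive integers, there is an ultrafilter $\mathcal U$ with $\lim_{\mathcal U}n=0$ in $b\mathbb Z$, whence $\lim_{\mathcal U}2^n=\lim_{\mathcal U}(2^n+n)$; the two pieces have intersecting Bohr closures, so no almost periodic function is $1$ on one and $0$ on the other. (This is also why Problem~\ref{prob:ExtendI0} of the paper treats unions of $I_0$ sets as a genuine extension rather than a triviality.) So Claim (1) is unsupported, and in fact it is \emph{false}, not merely unproved.

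Here is a counterexample to Claim (1). Let $A=\{4^{10^n}:n\in\mathbb N\}$, let $S_0=\{10^k+1:k\in\mathbb N\}$, let $(h_m)_{m\in\mathbb N}$ enumerate $\{4^s-4^t:s>t,\ s,t\in S_0\}$ in increasing order (discarding finitely many $m$ so that $h_m<4^{10^m}/2$), and set $B=\{4^{10^m}-1-h_m\}$, $E=A\cup B$. Every element of $A$ is $\equiv 1$ and every element of $B$ is $\equiv 0 \pmod 3$, so any subset of $A$ and any subset of $B$ have disjoint closures in $b\mathbb Z$; combined with lacunarity of $A$ and of $B$ (which makes each piece $I_0$), this shows disjoint subsets of $E$ always have disjoint closures, so $E$ is $I_0$ by Proposition 3.4.1 of \cite{GrahamHareBook}. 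By pigeonhole, $\{h_m\}$ meets every Bohr neighborhood of $0$, and $4^{10^m}-\bigl(4^{10^m}-1-h_m\bigr)=1+h_m\in E-E$, so $1$ is a Bohr limit point of $E-E$; yet uniqueness of base-$4$ digits (using $S_0\cap\{10^n\}=\varnothing$) together with the mod $3$ splitting shows $1\notin E-E$, i.e.\ $E\cap(E+1)=\varnothing$. So $1$ is a nonzero Bohr limit point of $E-E$ lying outside your $D$. Worse, every nonzero difference of $E$ occurs only finitely often, so $D=\{0\}$, Claim (2) is vacuously satisfied with $V=bG$, and your final gluing step would then assert that $E-E$ has no nonzero Bohr limit point at all --- which is false. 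So the strategy of reducing Bohr limit points to infinitely-repeated differences cannot be repaired: ``phantom'' limit points, approached but never attained, are exactly the phenomenon the neighborhood $U$ must excise. This is where the paper invokes the nontrivial structure theorem (Theorems 5.3.1 and 5.3.9 of \cite{GrahamHareBook}): $E$ is a finite union of disjoint sets $E_j$ such that each $E_j-E_j$ has $0$ as its only Bohr limit point, and then $U$ is chosen, via disjointness of the closures $\overline{E_j}$, to delete all cross differences $E_i-E_j$, $i\neq j$. Your Claim (2) and the regularity argument at the end are correct, but they cannot compensate for the failure of Claim (1).
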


\begin{proof}
  Let $E\subseteq  G$ be an $I_0$ set.  By Theorems 5.3.1 and 5.3.9 of \cite{GrahamHareBook}, $E$ is a union of finitely many sets $E_1,\dots, E_r$ such that for each $j$, the only Bohr limit point of $E_j-E_j$ is $0$.  This property is inherited by subsets, so we may assume that the $E_j$ are mutually disjoint.  Let $\tilde{E}_j$ be the closure of $E_j$ in the Bohr compactification of $G$.  By Proposition 3.4.1 of \cite{GrahamHareBook}, the $\tilde{E}_j$ are mutually disjoint. So for each $i,j, i \neq j$, there is a Bohr neighborhood $U_{i,j}$ of $0$ such that $(E_i-E_j)\cap U_{i,j}=\varnothing$.  Letting $U$ be the intersection of these $U_{i,j}$, we get that $U\cap \bigcup_{i\neq j \leq r} (E_i-E_j)=\varnothing$.  Now $U\cap (E-E)\subseteq  \bigcup_{j=1}^{r} (E_j-E_j)$, and the only Bohr limit point of each $E_j-E_j$ is $0$.  So the only Bohr limit point of the union is $0$, as well.  Thus the only Bohr limit point of $U\cap (E-E)$ is $0$.
\end{proof}

\section{Iterated differences}\label{sec:Reduction}
In this section we state Conjecture \ref{conj:SpecialKatznelson4Z} and prove that it implies Conjecture \ref{conj:KatznelsonGeneral}.  The following theorem on Bohr neighborhoods (Defintion \ref{def:BohrNhood}) is a key ingredient in the proof and a primary impetus for investigations of difference sets.  Given a subset $A$ of an abelian group $G$, we write $\Delta_2(A)$ for the set $\{(a-b)-(c-d): a,b,c,d\in A \text{ are mutually distinct}\}$.

\begin{theorem}[Bogoliouboff \cite{Bogoliouboff}, F{\o}lner, {\cite[Theorem 1]{Folner}}]
\label{thm:Bogoliouboff}
 Let $G$ be a countably infinite abelian group.  If $G$ is written as a finite union of sets $A_1, \dots, A_r$, then for some $j\leq r$ the set $\Delta_2(A_j)$ is a Bohr neighborhood of $0$.
\end{theorem}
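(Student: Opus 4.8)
The plan is to reduce the partition statement to a density statement and then run a Fourier (positive-definiteness) argument. First I would pass from the cover to a single dense piece: upper Banach density is subadditive with $d^*(G)=1$, so among $A_1,\dots,A_r$ some $A_j$ satisfies $d^*(A_j)\geq 1/r>0$. It therefore suffices to prove the following self-contained claim: if $A\subseteq G$ has $d^*(A)=\delta>0$, then $\Delta_2(A)$ is a Bohr neighborhood of $0$. Fix such an $A$ for the rest of the argument.

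Next I would set up Fourier analysis adapted to $\delta$. Since every countable abelian group is amenable, the Furstenberg correspondence principle (applied along a Følner sequence witnessing $\delta$) produces a measure-preserving $G$-action $(X,\mu,(T_g)_{g\in G})$ and a measurable set $B\subseteq X$ with $\mu(B)=\delta$, such that a positive lower bound on $\mu\bigl(\bigcap_{g} T_{-g}B\bigr)$ over a finite configuration forces that configuration to be realized inside the difference structure of $A$. Writing $U_g$ for the Koopman representation on $L^2(\mu)$, I define for each $\chi\in\widehat{G}$ the spectral coefficient $\widehat{1_B}(\chi)$ via the spectral measure of $1_B$; Bessel's inequality gives $\sum_\chi|\widehat{1_B}(\chi)|^2\leq\|1_B\|_2^2=\delta$. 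Consequently, for each $\rho>0$ the large-spectrum set $S_\rho:=\{\chi:|\widehat{1_B}(\chi)|\geq\rho\}$ is finite, and the trivial character lies in $S_\rho$ with coefficient $\delta$ whenever $\rho\leq\delta$. This finite set defines the candidate Bohr neighborhood $V:=\{g\in G:|\chi(g)-1|<\eta\text{ for all }\chi\in S_\rho\}$.

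The heart of the proof is a fourth-moment positivity computation. Let $h(g)=\langle U_g 1_B,1_B\rangle$ be the autocorrelation; it is positive-definite with spectral transform $|\widehat{1_B}(\chi)|^2\geq 0$ and $h(0)=\delta$, and $h(g)>0$ forces $g\in A-A$ through the correspondence. The doubled correlation $R(t):=\sum_\chi|\widehat{1_B}(\chi)|^4\chi(t)$ is, by Fourier inversion, the density of quadruples realizing $t$ as a sum of two elements of $A-A$, hence is supported on $(A-A)-(A-A)$. For $t\in V$ I would split off $S_\rho$: the main part is at least $(1-\eta)\sum_{\chi\in S_\rho}|\widehat{1_B}(\chi)|^4\geq(1-\eta)\delta^4$, while the tail is controlled by Bessel as $\bigl|\sum_{\chi\notin S_\rho}|\widehat{1_B}(\chi)|^4\chi(t)\bigr|\leq\rho^2\sum_{\chi\notin S_\rho}|\widehat{1_B}(\chi)|^2\leq\rho^2\delta$. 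Choosing $\eta<\tfrac12$ and $\rho^2<\delta^3/2$ makes $\operatorname{Re}R(t)>0$ uniformly for $t\in V$. A strictly positive value of $R(t)$ yields a positive density of quadruples $(a,b,c,d)\in A^4$ with $(a-b)-(c-d)=t$; since the degenerate quadruples (those with a coincidence among $a,b,c,d$) carry zero density, genuinely distinct quadruples survive, so $t\in\Delta_2(A)$. Thus $V\subseteq\Delta_2(A)$, proving $\Delta_2(A)$ is a Bohr neighborhood of $0$.

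The main obstacle is making this spectral bookkeeping rigorous on a general countable abelian group, where $G$ itself carries only an invariant mean rather than a Haar measure: iterated means do not compose, and both Bessel's inequality and the inversion formula for $R$ require justification. This is precisely what the passage to the measure-preserving system buys, since all Fourier manipulations then take place on $L^2(\mu)$ through the Koopman representation and its spectral measures; it is the technical core that Følner's original argument supplies directly, by working with Følner averages and a diagonalization to extract convergent correlations. The only remaining care is the quantitative suppression of degenerate quadruples, which follows because imposing any coincidence among the four points reduces the dimension of the averaging and hence forces a zero density contribution.
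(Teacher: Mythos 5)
Your proposal is correct in outline and takes a genuinely different route from the paper. The paper does not reprove the harmonic-analytic core at all: it fixes an invariant mean $M$, picks a cell $A$ with $M(1_A)>0$, recalls the function $\mu$ constructed in F{\o}lner's original argument (positivity of $\mu$ at $x$ forces $M(1_{A\cap(A-x)})>0$, hence $A\cap(A-x)$ is infinite), forms the mean-convolution $\mu*\mu(x)=M_t\bigl(\mu(t)\mu(x-t)\bigr)$, cites Section 4 of F{\o}lner's paper for the fact that $\{x:\mu*\mu(x)>0\}$ contains a Bohr neighborhood of $0$, and adds exactly one new observation: $\mu*\mu(x)>0$ forces $x\in\Delta_2(A)$, the point being that positivity under a mean produces \emph{infinitely many} witnesses, from which mutually distinct $a_1,a_2,a_3,a_4$ can be selected. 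You replace this mean-based machinery with the Furstenberg correspondence principle and Koopman spectral theory: your $R(t)=\sum_\chi|\widehat{1_B}(\chi)|^4\chi(t)$ is precisely the ergodic-theoretic avatar of F{\o}lner's $\mu*\mu$, and your large-spectrum splitting is an explicit, self-contained proof of the Bohr-neighborhood step that the paper delegates to its citation; as a byproduct it yields the rank and radius bounds (rank at most $2/\delta^{2}$, radius an absolute constant) that the paper explicitly omits. The costs are the two justifications you flag yourself --- the correspondence principle for countable abelian groups, and the mean ergodic theorem along F{\o}lner sequences, which is what shows that only the atoms of the spectral measure survive in the inversion formula $R(t)=\lim_N\mathbb{E}_{u\in\Phi_N}h(u)h(u-t)$ --- plus two small repairs. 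First, the correspondence principle does not supply an ergodic system, so you only get $|\widehat{1_B}(\chi_0)|\geq\delta$ (via $\|P_{\mathrm{inv}}1_B\|_2\geq\mu(B)$) rather than $=\delta$; fortunately the lower bound is all your computation uses. Second, the cleanest disposal of degenerate quadruples is not a ``dimension count'' on an averaging you have not defined, but the paper's own mechanism: from $R(t)>0$ choose $u\notin\{0,t\}$ with $h(u)h(u-t)>0$ (possible since a positive-upper-density set of $u$ works), note that $d^*(A\cap(A-u))\geq\mu(B\cap T_u^{-1}B)>0$ makes both witness sets $A\cap(A-u)$ and $A\cap(A-(u-t))$ infinite, and then select the four elements distinct one at a time.
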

Readers familiar with the literature will note that we have omitted bounds on the rank and radius of the Bohr neighborhood in Theorem \ref{thm:Bogoliouboff}, which may be of interest in quantitative approaches to Conjecture \ref{conj:KatznelsonGeneral}.

Often Theorem \ref{thm:Bogoliouboff} is stated with $(A_j-A_j)-(A_j-A_j)$ in place of $\Delta_2(A_j)$, but imposing the additional restriction that the terms in the difference be distinct does not significantly alter the structure.  In \S \ref{sec:Folner} we explain how F{\o}lner's proof in \cite{Folner} yields Theorem \ref{thm:Bogoliouboff}.

Let $\mathbb Z^\omega$ denote the direct sum of countably many copies of $\mathbb Z$ with the usual presentation: elements of $\mathbb Z^\omega$ are sequences $(n_1,n_2,n_3,\dots)$ of integers where $n_j=0$ for all but finitely many $j$.  For each $j\in \mathbb N$, let $\mb e_j$ be the element of $\mathbb Z^{\omega}$ where $(\mb e_j)_j=1$ and $(\mb e_j)_k=0$ if $j\neq k$.   So $\mb e_1 = (1,0,0,\dots)$, $\mb e_2 = (0,1,0,0,\dots)$, etc.  We write $\mathcal E_1$ for $\{\mb e_j:j\in \mathbb N\}$.  The set $\Delta_2(\mathcal E_1)$ contains elements such as $(0,1,0,-1,1,-1,0, \dots)$.

\begin{conjecture}\label{conj:SpecialKatznelson4Z}
  If $S\subseteq  \Delta_2(\mathcal E_1)$ and $S$ is a set of Bohr recurrence in $\mathbb Z^\omega$, then $S$ is a set of chromatic recurrence.
\end{conjecture}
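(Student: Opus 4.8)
The plan is to start from the strategy that proved Proposition \ref{prop:RelativeKatznelson2Diff} and push it as far as it will go. Fix $r\in\mathbb N$ and a coloring $f:\mathbb Z^\omega\to\{1,\dots,r\}$; the goal is to find $a,b\in\mathbb Z^\omega$ with $f(a)=f(b)$ and $b-a\in S$. In the single-difference case the recipe was to interpolate $f$ by an almost periodic $\psi$, use Lemma \ref{lem:BohrRec} to obtain $s\in S$ with $|\psi(g+s)-\psi(g)|<1$ for every $g$, and then---writing $s=b-a$ with $a,b$ in an $I_0$ set---set $g=a$ to read off $f(a)=f(b)$. Since $S\subseteq\Delta_2(\mathcal E_1)\subseteq(\mathcal E_1-\mathcal E_1)-(\mathcal E_1-\mathcal E_1)$, the tempting first move is to apply Proposition \ref{prop:RelativeKatznelson2Diff} verbatim with $E:=\mathcal E_1-\mathcal E_1$, so that $S\subseteq E-E$.

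This is exactly where the method fails, and pinpointing the failure is the crux. The set $E=\mathcal E_1-\mathcal E_1$ is not an $I_0$ set: it carries additive relations such as $(\mb e_i-\mb e_j)+(\mb e_j-\mb e_k)=\mb e_i-\mb e_k$ that are incompatible with the $I_0$ property, and one does not expect $\Delta_2(\mathcal E_1)$ to be contained in the difference set of any $I_0$ set. This reflects a genuine structural difference: by Lemma \ref{lem:I0minusI0} the difference set of an $I_0$ set has $0$ as its only nearby Bohr limit point, whereas Theorem \ref{thm:Bogoliouboff} shows that iterated difference sets of large sets contain Bohr neighborhoods. Consequently there is no almost periodic $\psi$ interpolating an arbitrary $f$ across the relevant coordinates, so Lemma \ref{lem:BohrRec} cannot be leveraged to extract a monochromatic pair, and the one-step argument collapses---this is the obstruction that \S\ref{sec:DashHopes} formalizes.

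The route I would actually pursue is the chromatic reformulation together with Theorem \ref{thm:Bogoliouboff}, arguing by contraposition. Suppose $S$ is not a set of chromatic recurrence, witnessed by a finite coloring $\mathbb Z^\omega=A_1\cup\dots\cup A_r$ in which no two same-colored points differ by an element of $S$, so that $(A_j-A_j)\cap S=\varnothing$ for each $j$; the aim becomes to produce a Bohr neighborhood of $0$ disjoint from $S$, contradicting Bohr recurrence. Because elements of $\Delta_2(\mathcal E_1)$ are indexed by four-tuples of coordinates in $\mathbb N$, this recasts the problem as one about colorings of an associated hypergraph on $\mathbb N$, parallel to the hypergraph-chromatic-number reduction available in the $\mathbb F_p^\omega$ setting. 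Theorem \ref{thm:Bogoliouboff} supplies suggestive leverage: some $\Delta_2(A_j)$ is a Bohr neighborhood of $0$, so a Bohr-recurrent $S$ must meet it. The difficulty---and the reason this is stated as a conjecture rather than a proposition---is that a proper coloring only guarantees $(A_j-A_j)\cap S=\varnothing$, whereas $\Delta_2(A_j)$ is strictly larger than $A_j-A_j$; thus $S\cap\Delta_2(A_j)\neq\varnothing$ yields no contradiction on its own. Forcing a Bohr-recurrent $S\subseteq\Delta_2(\mathcal E_1)$ to meet a single difference set $A_j-A_j$, rather than merely the iterated difference set, is precisely the unresolved core of Katznelson's problem, and I expect it to require a genuinely new idea rather than a refinement of the present tools.
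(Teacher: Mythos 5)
There is no proof in the paper to compare against: the statement you were given is Conjecture \ref{conj:SpecialKatznelson4Z}, which the paper leaves open, stating explicitly that it has not proved any instance of it beyond trivial ones. Your proposal is likewise not a proof---it ends by conceding that the key step (forcing a Bohr recurrent $S\subseteq \Delta_2(\mathcal E_1)$ to meet some single difference set $A_j-A_j$, rather than merely the iterated difference set $\Delta_2(A_j)$ supplied by Theorem \ref{thm:Bogoliouboff})---is out of reach, and that concession is the correct verdict. Indeed, the paper's interest in this statement is structural: by Proposition \ref{prop:RelativeKatznelson4Diff} it \emph{implies} the full Conjecture \ref{conj:KatznelsonGeneral}, so any proof of it would settle Katznelson's problem for every countable abelian group; your diagnosis that a genuinely new idea is required is consistent with how the paper positions the conjecture.

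Two comments on your obstruction analysis. First, your claim that $\mathcal E_1-\mathcal E_1$ is not an $I_0$ set is true, but ``additive relations'' are not by themselves the reason: the $I_0$ property, unlike independence (Definition \ref{def:Independent}), is not automatically destroyed by relations. The clean argument is that, by Theorem \ref{thm:Poincare}, $0$ is a Bohr limit point of $(\mathcal E_1-\mathcal E_1)\setminus\{0\}$, so the bounded function equal to $1$ at $0$ and to $0$ elsewhere on $\mathcal E_1-\mathcal E_1$ cannot be the restriction of any almost periodic function, since almost periodic functions are continuous in the Bohr topology; the same Poincar\'e argument combined with Lemma \ref{lem:I0minusI0} makes rigorous your expectation that $\Delta_2(\mathcal E_1)$ is not contained in $F-F$ for \emph{any} $I_0$ set $F$. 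Second, and more importantly, the obstruction recorded in \S\ref{sec:DashHopes} is stronger than the one you give: it is not merely that the \emph{hypothesis} of Proposition \ref{prop:RelativeKatznelson2Diff} fails for $V=\mathcal E_1-\mathcal E_1$, but that the \emph{conclusion} of any argument of that shape is false. In the $\mathbb F_2$ analogue the paper exhibits a Bohr recurrent set $S_{\square}\subseteq \mathcal E_4^{(2)}$ for which $\Cay(\mathcal E_2^{(2)},S_{\square})$ has chromatic number $2$, and it asserts that a similar construction works for $\Delta_2(\mathcal E_1)$ in $\mathbb Z^\omega$. So no argument that only colors the intermediate set $V$---interpolation-based or otherwise---can prove the conjecture; any proof must genuinely use colorings of the whole group. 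Your identification of the mismatch between $A_j-A_j$ and $\Delta_2(A_j)$ in the contrapositive route is exactly the right description of where the known tools stop.
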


\begin{proposition}\label{prop:RelativeKatznelson4Diff}
Conjecture \ref{conj:SpecialKatznelson4Z} implies Conjecture \ref{conj:KatznelsonGeneral}.
\end{proposition}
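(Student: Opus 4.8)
The plan is to assume Conjecture \ref{conj:SpecialKatznelson4Z}, fix a countably infinite abelian group $G$ and a set of Bohr recurrence $S\subseteq G$, and manufacture from $S$ a companion set $\tilde S\subseteq \Delta_2(\mathcal E_1)$ inside $\mathbb Z^\omega$ to which the special case applies. Enumerate $G=\{g_j:j\in\mathbb N\}$ bijectively and let $\phi:\mathbb Z^\omega\to G$ be the homomorphism determined by $\phi(\mb e_j)=g_j$. Define
\[
\tilde S := \Delta_2(\mathcal E_1)\cap \phi^{-1}(S),
\]
so that $\tilde S\subseteq\Delta_2(\mathcal E_1)$ and $\phi(\tilde S)\subseteq S$ by construction. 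I would then prove two things: (a) $\tilde S$ is a set of Bohr recurrence in $\mathbb Z^\omega$, so that Conjecture \ref{conj:SpecialKatznelson4Z} yields that $\tilde S$ is a set of chromatic recurrence; and (b) chromatic recurrence of $\tilde S$ in $\mathbb Z^\omega$ implies chromatic recurrence of $S$ in $G$, which is the desired instance of Conjecture \ref{conj:KatznelsonGeneral}.

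Step (b) is routine and I would dispatch it first. Given $r$ and any $f:G\to\{1,\dots,r\}$, pull it back to $\tilde f:=f\circ\phi$ on $\mathbb Z^\omega$. Chromatic recurrence of $\tilde S$ produces $\tilde x,\tilde y$ with $\tilde f(\tilde x)=\tilde f(\tilde y)$ and $\tilde y-\tilde x\in\tilde S$; setting $x=\phi(\tilde x)$ and $y=\phi(\tilde y)$ gives $f(x)=f(y)$ and $y-x=\phi(\tilde y-\tilde x)\in\phi(\tilde S)\subseteq S$, exactly a monochromatic pair whose difference lies in $S$.

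The real content is step (a), where the abundance of characters on $\mathbb Z^\omega$ must be reconciled with the mere Bohr recurrence of $S$ on $G$. Fix characters $\chi_1,\dots,\chi_m$ of $\mathbb Z^\omega$ and $\varepsilon>0$, and record $\mb z_j:=(\chi_1(\mb e_j),\dots,\chi_m(\mb e_j))\in(\mathcal S^1)^m$ for each $j$. The decisive observation is that an element $(\mb e_a-\mb e_b)-(\mb e_c-\mb e_d)$ of $\Delta_2(\mathcal E_1)$ has exponent vector summing to zero, so each $\chi_i$ evaluated on it stays close to $1$ whenever $\mb z_a,\mb z_b,\mb z_c,\mb z_d$ lie in a common small set. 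Accordingly I cover the compact group $(\mathcal S^1)^m$ by finitely many pieces $V_1,\dots,V_r$ of diameter less than $\varepsilon/3$; transporting the partition $\{j:\mb z_j\in V_t\}$ of $\mathbb N$ through the enumeration yields a partition $G=B_1\cup\dots\cup B_r$. Now apply Theorem \ref{thm:Bogoliouboff}: some $\Delta_2(B_{t_0})$ is a Bohr neighborhood of $0$ in $G$, and since $S$ is a set of Bohr recurrence there is $s\in S\cap\Delta_2(B_{t_0})$, say $s=(g_a-g_b)-(g_c-g_d)$ with $g_a,g_b,g_c,g_d\in B_{t_0}$ mutually distinct. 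The indices $a,b,c,d$ are then distinct, their $\mb z$-vectors all lie in $V_{t_0}$, and $\tilde s:=(\mb e_a-\mb e_b)-(\mb e_c-\mb e_d)$ satisfies $\phi(\tilde s)=s\in S$, whence $\tilde s\in\tilde S$ with $|\chi_i(\tilde s)-1|<\varepsilon$ for all $i$. This establishes Bohr recurrence of $\tilde S$.

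The main obstacle is exactly the tension inside step (a): forcing the $\mathbb Z^\omega$-characters to be small pushes one toward sparse index sets, whereas finding the required pattern in $S$ needs a large set on the $G$ side. The resolution is to refuse to select the index set by compactness alone, and instead to feed the character-value buckets into Theorem \ref{thm:Bogoliouboff} as a finite coloring, which supplies a single bucket whose iterated difference set is a Bohr neighborhood, precisely the largeness that Bohr recurrence of $S$ can exploit. The remaining items to verify carefully are the quantitative claim that exponent-zero products of nearby circle elements stay within $\varepsilon$ of $1$, and that bijectivity of the enumeration is what converts mutually distinct group elements into distinct basis indices, so that $\tilde s$ genuinely lies in $\Delta_2(\mathcal E_1)$.
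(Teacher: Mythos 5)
Your proposal is correct and follows essentially the same route as the paper: your step (a) is precisely the paper's Lemma \ref{lem:LiftToE4} specialized to $Q=\mathcal E_1$ (covering the character target by small pieces, pushing the resulting cover of $\mathcal E_1$ forward to $G$, invoking Theorem \ref{thm:Bogoliouboff}, and lifting the element of $S\cap\Delta_2(B_{t_0})$ back to $\Delta_2(\mathcal E_1)$), while your step (b) is the paper's Lemma \ref{lem:Image} (pulling back colorings along the surjection). The only differences are cosmetic: you inline the two lemmas into one argument, use a bijective enumeration rather than a general surjection of $\mathcal E_1$ onto $G$, and phrase smallness via characters into $(\mathcal S^1)^m$ with $\varepsilon/3$-pieces instead of homomorphisms into $\mathbb T^d$ with $\varepsilon/2$-pieces.
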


\begin{remark}
Our interest in Conjecture \ref{conj:SpecialKatznelson4Z} arises from our failed attempts to prove that Proposition \ref{prop:RelativeKatznelson2Diff} implies Conjecture \ref{conj:KatznelsonGeneral}.  To analogize Proposition \ref{prop:RelativeKatznelson2Diff} with Conjecture \ref{conj:SpecialKatznelson4Z}, note that the former concerns subsets of a difference set of an $I_0$ set,  while the latter concerns subsets of an iterated difference set of $\mathcal E_1$, and $\mathcal E_1$ is an $I_0$ set (since it is independent).

One may hope for an easy proof of Conjecture \ref{conj:SpecialKatznelson4Z} along the lines of our proof of Proposition \ref{prop:RelativeKatznelson2Diff}.  We point out an obstruction to this hope in \S\ref{sec:DashHopes}.
\end{remark}

The proof of Proposition \ref{prop:RelativeKatznelson4Diff} consists of the following two lemmas.

\begin{lemma}\label{lem:LiftToE4}
  Let $H$ and $G$ be countably infinite abelian groups, let $Q\subseteq  H$, and let $\rho:H\to G$ be a homomorphism such that $\rho(Q)=G$.  If $S\subseteq  G$ is a set of Bohr recurrence, then $\rho^{-1}(S)\cap  \Delta_2(Q)$ is a set of Bohr recurrence in $H$.
\end{lemma}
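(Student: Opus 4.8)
The plan is to verify the character criterion for Bohr recurrence (the third bullet of Definition \ref{def:Recurrence}) directly for $\rho^{-1}(S)\cap\Delta_2(Q)$, using Theorem \ref{thm:Bogoliouboff} to transfer the Bohr recurrence of $S$ across $\rho$. So I fix characters $\psi_1,\dots,\psi_n$ of $H$ and $\varepsilon>0$, and seek $t\in\rho^{-1}(S)\cap\Delta_2(Q)$ with $\max_j|\psi_j(t)-1|<\varepsilon$. The key preliminary observation is that the characters of $H$ can be controlled simply by keeping the four points defining $t$ close together: if $a,b,c,d\in Q$ have their $\psi_j$-values pairwise within $\varepsilon/2$ for every $j$, then $|\psi_j(a-b)-1|=|\psi_j(a)-\psi_j(b)|<\varepsilon/2$ and likewise for $c-d$, and since $\psi_j(t)=\psi_j(a-b)\psi_j(c-d)^{-1}$ with all quantities on the unit circle, the triangle inequality gives $|\psi_j(t)-1|<\varepsilon$ for $t=(a-b)-(c-d)$.

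To produce clusters of points with nearly equal character values, I would consider the map $\Psi=(\psi_1,\dots,\psi_n):H\to(\mathcal S^1)^n$ and cover the compact group $(\mathcal S^1)^n$ by finitely many sets each of diameter less than $\varepsilon/2$ in the metric $\max_j|z_j-w_j|$. Pulling these back by $\Psi$ and intersecting with $Q$ yields a finite cover $Q=\bigcup_{k=1}^N Q_k$ in which any two points of a common $Q_k$ have $\psi_j$-values within $\varepsilon/2$ for all $j$. Since $\rho(Q)=G$, applying $\rho$ gives a finite cover $G=\bigcup_{k=1}^N\rho(Q_k)$ of the countably infinite abelian group $G$. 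This is exactly the situation of Theorem \ref{thm:Bogoliouboff}, so some $\rho(Q_k)=:A$ has $\Delta_2(A)$ equal to a Bohr neighborhood of $0$ in $G$.

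It then remains to realize an element of $S$ lying in this Bohr neighborhood as $\rho(t)$ for an admissible $t$. Because $S$ is a set of Bohr recurrence in $G$, it meets every Bohr neighborhood of $0$, so I may choose $s\in S\cap\Delta_2(\rho(Q_k))$ and write $s=(g_1-g_2)-(g_3-g_4)$ with $g_1,g_2,g_3,g_4\in\rho(Q_k)$ mutually distinct. Selecting preimages $a,b,c,d\in Q_k$ with $\rho(a)=g_1$, $\rho(b)=g_2$, $\rho(c)=g_3$, $\rho(d)=g_4$, the distinctness of the $g_i$ forces $a,b,c,d$ to be mutually distinct, so $t:=(a-b)-(c-d)\in\Delta_2(Q)$ and $\rho(t)=s\in S$; and since $a,b,c,d$ all lie in the single cell $Q_k$, the opening observation yields $|\psi_j(t)-1|<\varepsilon$ for every $j$. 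This $t$ witnesses the required recurrence, and as $\psi_1,\dots,\psi_n$ and $\varepsilon$ were arbitrary, $\rho^{-1}(S)\cap\Delta_2(Q)$ is a set of Bohr recurrence in $H$.

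The step I expect to be the genuine obstacle — and the reason the Bohr recurrence of $S$ alone does not obviously suffice — is that I need $\rho(t)$ to lie \emph{exactly} in $S$, not merely close to the identity. Lifting the $\psi_j$ to characters of $G$ and applying recurrence of $S$ would control $\rho(t)$ only approximately; instead I must exhibit an \emph{entire} Bohr neighborhood of $0$ in $G$ all of whose points arise as $\rho(t)$ for admissible $t\in\Delta_2(Q)$, and it is precisely Theorem \ref{thm:Bogoliouboff} applied to the cover $G=\bigcup_k\rho(Q_k)$ that supplies such a neighborhood. The two remaining verifications are routine: that mutually distinct $g_i$ lift to mutually distinct $q_i$, and the elementary unit-circle estimate bounding $|\psi_j(t)-1|$.
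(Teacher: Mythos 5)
Your proof is correct and follows essentially the same route as the paper's: cover the compact target of the characters by small-diameter sets, pull back to $Q$, push forward by $\rho$ to cover $G$, invoke Theorem \ref{thm:Bogoliouboff} to get a Bohr neighborhood of the form $\Delta_2(\rho(Q_k))$, intersect it with $S$, and lift back to mutually distinct elements of $Q_k$. The only cosmetic difference is that you work with characters into $(\mathcal S^1)^n$ while the paper uses a homomorphism into $\mathbb T^d$, which Definition \ref{def:Recurrence} declares equivalent.
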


\begin{proof}   Assuming $H,G,Q, S$, and $\rho$ are as in the hypothesis, we must prove that for all $d\in \mathbb N,$ all homomorphisms $\psi:H\to \mathbb T^d$, and all $\varepsilon>0$, there is an $h\in \rho^{-1}(S)\cap \Delta_2(Q)$ such that $\|\psi(h)\|<\varepsilon$.  To do so, we will find distinct elements $q_1,q_2,q_3,q_4\in Q$ such that
  \begin{equation}\label{eqn:Goal}
  \rho(q_1-q_2-q_3+q_4)\in S   \quad \text{and} \quad \|\psi(q_1-q_2-q_3+q_4)\|<\varepsilon.
  \end{equation}  So fix $d\in \mathbb N$,  a homomorphism $\psi:H\to \mathbb T^d$, and $\varepsilon>0$.  To find $q_i$ satisfying (\ref{eqn:Goal}), write $\mathbb T^d$ as a finite union of sets $B_1,\dots, B_r$ having diameter less than $\varepsilon/2$.  Then the preimages $\psi^{-1}(B_1),\dots,\psi^{-1}(B_r)$ cover $H$.  In particular, the sets $C_j:=\psi^{-1}(B_j)\cap Q$, $j\leq r$, cover $Q$.  The hypothesis that $\rho(Q)=G$ now implies that the images $A_1:=\rho(C_1)$, \dots, $A_r:=\rho(C_r)$ cover $G$.  Theorem \ref{thm:Bogoliouboff} then implies that for some $j$, the set $\Delta_2(A_j)$ is a Bohr neighborhood of $0_G$ in $G$.  Since we assumed $S$ is a set of Bohr recurrence, we then have $S\cap \Delta_2(A_j)\neq \varnothing$.  In particular, there are mutually distinct $a_1,a_2,a_3,a_4\in A_j$ and $s\in S$ such that $s = a_1-a_2-a_3+a_4$.  The definition of $A_j$ allows us to write the latter equation as
\begin{equation}\label{eqn:srhoe}
  s = \rho(q_1-q_2-q_3+q_4) \quad \text{where } q_i\in C_j \text{ for each } i.
\end{equation}
The definition of $C_j$ means that each $q_i$ lies in $Q$ and each $\psi(q_i)$ lies in $B_j$, which has diameter less than $\varepsilon/2$. This implies \begin{equation}\label{eqn:q4Triangle}\|\psi\bigl((q_1-q_2)-(q_3-q_4)\bigr)\|\leq \|\psi(q_1)-\psi(q_2)\|+\|\psi(q_3)-\psi(q_4)\| <\varepsilon.
\end{equation}  Now (\ref{eqn:srhoe}) and (\ref{eqn:q4Triangle}) together show that the $q_i$ satisfy (\ref{eqn:Goal}).
\end{proof}

\begin{lemma}\label{lem:Image}
  Let $H$ and $G$ be abelian groups and let $R\subseteq  H$ be a set of chromatic recurrence.  If $\rho:H\to G$ is a homomorphism, then $\rho(R)$ is a set of chromatic recurrence in $G$.
\end{lemma}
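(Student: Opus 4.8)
The plan is to use the coloring-based characterization of chromatic recurrence from Definition \ref{def:Recurrence} and to transport the coloring problem from $G$ back to $H$ along $\rho$. Concretely, to prove that $\rho(R)$ is a set of chromatic recurrence in $G$, I must verify that for every $r\in\mathbb N$ and every $f\colon G\to\{1,\dots,r\}$ there exist $a,b\in G$ with $f(a)=f(b)$ and $b-a\in\rho(R)$. So I would fix such an $r$ and $f$ at the outset.

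The key step is to pull the coloring back. Define $g:=f\circ\rho\colon H\to\{1,\dots,r\}$, which is a coloring of $H$ by $r$ colors. Since $R$ is assumed to be a set of chromatic recurrence in $H$, applying the definition to $g$ produces elements $x,y\in H$ with $g(x)=g(y)$ and $y-x\in R$.

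Finally I would push the witnesses forward: set $a:=\rho(x)$ and $b:=\rho(y)$. Because $\rho$ is a homomorphism, $b-a=\rho(y)-\rho(x)=\rho(y-x)$, and since $y-x\in R$ this lies in $\rho(R)$. Moreover $f(a)=g(x)=g(y)=f(b)$, so $a$ and $b$ are the required witnesses, completing the verification.

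There is no genuine obstacle here; the only point worth flagging is that the homomorphism identity $\rho(y)-\rho(x)=\rho(y-x)$ is precisely what converts the difference $y-x\in R$ into a difference $b-a\in\rho(R)$. Note in particular that $\rho$ need be neither injective nor surjective for the argument to go through, so the lemma holds for an arbitrary homomorphism as stated.
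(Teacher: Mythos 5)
Your proof is correct and is essentially identical to the paper's own argument: both pull the coloring $f$ of $G$ back to the coloring $f\circ\rho$ of $H$, invoke chromatic recurrence of $R$ to get witnesses in $H$, and push them forward via the homomorphism $\rho$. No further comment is needed.
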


\begin{proof}
  Let $k\in \mathbb N$ and let $f:G\to \{1,\dots,k\}$.  Then $\tilde{f}:=f\circ \rho$ is a function from $H$ to $\{1,\dots,k\}$.  Since $R$ is a set of chromatic recurrence, there exist $h_1, h_2\in H$ such that $\tilde{f}(h_1)=\tilde{f}(h_2)$ and $h_2-h_1\in R$.  By the definition of $\rho$, we have $f(\rho(h_1))=f(\rho(h_2))$ and $\rho(h_2-h_1)\in \rho(R)$, meaning $\rho(h_2)-\rho(h_1)\in \rho(R)$.  Setting $a=\rho(h_1)$ and $b=\rho(h_2)$, we have found $a, b\in G$ such that $f(a)=f(b)$ and $b-a\in \rho(R)$.
\end{proof}

\begin{observation}\label{obs:Ind}
  If $G$ is an abelian group and $f:\mathcal E_1\to G$, then there is a homomorphism $\rho:\mathbb Z^\omega\to G$ such that $\rho|_{\mathcal E_1}=f$.  Such a $\rho$ is defined uniquely by $\rho(\sum c_j\mb e_j):=\sum c_jf(\mb e_j)$.
\end{observation}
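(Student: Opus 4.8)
The plan is to recognize Observation \ref{obs:Ind} as the universal property of the free abelian group $\mathbb Z^\omega$ relative to its basis $\mathcal E_1$, and to verify it directly from the defining presentation of $\mathbb Z^\omega$. The single fact I would isolate first is that every $x\in\mathbb Z^\omega$ admits a \emph{unique} expression $x=\sum_j c_j\mb e_j$ with $c_j\in\mathbb Z$ and $c_j=0$ for all but finitely many $j$: both existence and uniqueness follow immediately from the coordinatewise description of $\mathbb Z^\omega$, since $c_j$ is forced to equal the $j$th coordinate of $x$. This uniqueness is exactly what makes the proposed formula for $\rho$ well defined.

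With this in hand, I would define $\rho:\mathbb Z^\omega\to G$ by $\rho(x):=\sum_j c_j f(\mb e_j)$, where $x=\sum_j c_j\mb e_j$ is the unique representation above. Because only finitely many $c_j$ are nonzero, this is a finite sum in $G$, so the expression is meaningful even though $G$ carries no notion of infinite summation. To check that $\rho$ is a homomorphism, I would take $x=\sum_j c_j\mb e_j$ and $y=\sum_j d_j\mb e_j$, observe that $x+y=\sum_j(c_j+d_j)\mb e_j$ is the unique representation of $x+y$ (coordinates add), and compute $\rho(x+y)=\sum_j(c_j+d_j)f(\mb e_j)=\sum_j c_j f(\mb e_j)+\sum_j d_j f(\mb e_j)=\rho(x)+\rho(y)$, where the middle equality uses commutativity and associativity in $G$ to split a finite sum. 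That $\rho|_{\mathcal E_1}=f$ is immediate, since the representation of $\mb e_k$ has $c_k=1$ and all other coefficients $0$, whence $\rho(\mb e_k)=f(\mb e_k)$.

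For uniqueness, I would suppose $\rho'$ is any homomorphism with $\rho'|_{\mathcal E_1}=f$ and show $\rho'=\rho$. Applying the homomorphism property of $\rho'$ to the finite sum $x=\sum_j c_j\mb e_j$ gives $\rho'(x)=\sum_j c_j\rho'(\mb e_j)=\sum_j c_j f(\mb e_j)=\rho(x)$, where $c_j\mb e_j$ is interpreted as the $c_j$-fold sum of $\mb e_j$ (or its additive inverse if $c_j<0$), an operation any homomorphism respects. Thus $\rho'$ is forced to coincide with the stated formula.

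This observation is entirely routine, and I do not expect a genuine obstacle; the only point requiring any care is the bookkeeping that the defining sum $\sum_j c_j f(\mb e_j)$ is finite and hence lies in $G$ without appeal to any topology or convergence, and that coordinatewise addition in $\mathbb Z^\omega$ translates cleanly into additivity of $\rho$. Everything else is the standard universal property of a free abelian group specialized to the basis $\mathcal E_1$.
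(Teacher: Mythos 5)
Your proposal is correct and matches the paper's approach: the paper offers no separate proof beyond the displayed formula $\rho(\sum c_j\mb e_j):=\sum c_jf(\mb e_j)$, which is precisely the universal property of the free abelian group $\mathbb Z^\omega$ on the basis $\mathcal E_1$ that you verify. Your write-up simply makes explicit the routine checks (unique finite representation, well-definedness, additivity, and uniqueness of the extension) that the paper leaves implicit.
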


\begin{proof}[Proof of Proposition \ref{prop:RelativeKatznelson4Diff}]
Let $G$ be a countably infinite abelian group, and let $S\subseteq  G$ be a set of Bohr recurrence.  By Observation \ref{obs:Ind} choose a homomorphism $\rho:\mathbb Z^\omega\to G$ such that $\rho(\mathcal E_1)=G$.  Assuming Conjecture \ref{conj:SpecialKatznelson4Z} holds, we will prove that $\rho^{-1}(S)$ is a set of chromatic recurrence.   To see this, first apply Lemma \ref{lem:LiftToE4} with $Q=\mathcal E_1$ to get that $\rho^{-1}(S)\cap \Delta_2(\mathcal E_1)$ is a set of Bohr recurrence.  Now Conjecture \ref{conj:SpecialKatznelson4Z} implies $\rho^{-1}(S)\cap \Delta_2(\mathcal E_1)$ is a set of chromatic recurrence, so $\rho^{-1}(S)$ is a set of chromatic recurrence, as well.   By Lemma \ref{lem:Image} we get that $S$ is a set of chromatic recurrence. \end{proof}

\begin{remark}
  Some special cases of Conjecture \ref{conj:SpecialKatznelson4Z} are easy to prove.  For example,  $\Delta_2(\mathcal E_1)$  contains an infinite difference set $E-E$, where $E:=\{\mb e_{2n}-\mb e_{2n+1} : n\in \mathbb N\}$.  This $E$ is an independent set (Definition \ref{def:Independent}), and therefore an $I_0$ set, in $\mathbb Z^{\omega}$. Proposition \ref{prop:RelativeKatznelson2Diff} then proves that every subset of $E-E$ which is a set of Bohr recurrence is also a set of chromatic recurrence.  We have not proved any case of Conjecture \ref{conj:SpecialKatznelson4Z} beyond such trivial instances.
\end{remark}

\begin{remark}\label{rem:Real}
  The only facts about sets of chromatic recurrence used in the proof of Proposition \ref{prop:RelativeKatznelson4Diff} are the following:
  \begin{enumerate}
   \item[$\bullet$] the homomorphic image of a set chromatic recurrence is again a set of chromatic recurrence (Lemma \ref{lem:Image}, which we applied only with a surjective $\rho$);
       \item[$\bullet$] the property of being a set of chromatic recurrence is \emph{upward closed}: if $S$ is a set of chromatic recurrence and $S\subset S'$, then $S'$ is also a set of chromatic recurrence.
   \end{enumerate}
         A trivial modification of the proof of Proposition \ref{prop:RelativeKatznelson4Diff} will therefore prove the following generalization, which may be of interest independent from Conjecture \ref{conj:KatznelsonGeneral}.
\end{remark}

\begin{proposition}\label{prop:Real}
  Let $G$ be a countable abelian group, let $\mathcal R$ and $\mathcal S$ be classes of subsets of  $\mathbb Z^\omega$ and $G$, respectively, such that
  \begin{enumerate}
    \item[(i)] if $\rho:\mathbb Z^{\omega}\to G$ is a surjective homomorphism and $R\in\mathcal R$, then $\rho(R)\in \mathcal S$;
    \item[(ii)] $\mathcal R$ and $\mathcal S$ are upward closed.
  \end{enumerate}
If every subset of $\Delta_2(\mathcal E_1)$ in $\mathbb Z^{\omega}$ which is a set of Bohr recurrence belongs to $\mathcal R$, then every subset of $G$ which is a set of Bohr recurrence belongs to $\mathcal S$.
\end{proposition}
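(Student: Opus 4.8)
The plan is to transcribe the proof of Proposition \ref{prop:RelativeKatznelson4Diff} essentially word for word, replacing the phrase ``is a set of chromatic recurrence'' by ``belongs to $\mathcal R$'' when speaking of subsets of $\mathbb Z^\omega$, and by ``belongs to $\mathcal S$'' when speaking of subsets of $G$. The two closure features of chromatic recurrence isolated in Remark \ref{rem:Real} are precisely hypotheses (i) and (ii), so the argument should go through unchanged. Concretely, I would fix a set $S\subseteq G$ of Bohr recurrence; the goal is $S\in\mathcal S$. Since $\mathcal E_1$ is countably infinite and $G$ is countable, I first choose a surjection $f\colon\mathcal E_1\to G$ and, by Observation \ref{obs:Ind}, extend it to a homomorphism $\rho\colon\mathbb Z^\omega\to G$. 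This $\rho$ is surjective and satisfies $\rho(\mathcal E_1)=G$.

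Next I would lift $S$ and feed the lift into the standing hypothesis. Applying Lemma \ref{lem:LiftToE4} with $H=\mathbb Z^\omega$ and $Q=\mathcal E_1$ (the required condition $\rho(Q)=G$ holding by construction), I obtain that $\rho^{-1}(S)\cap\Delta_2(\mathcal E_1)$ is a set of Bohr recurrence in $\mathbb Z^\omega$. As this is a subset of $\Delta_2(\mathcal E_1)$, the hypothesis of the proposition places it in $\mathcal R$. Since $\rho^{-1}(S)\cap\Delta_2(\mathcal E_1)\subseteq\rho^{-1}(S)$ and $\mathcal R$ is upward closed by (ii), it follows that $\rho^{-1}(S)\in\mathcal R$. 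Finally I would push forward along $\rho$: because $\rho$ is surjective and $\rho^{-1}(S)\in\mathcal R$, hypothesis (i) yields $\rho\bigl(\rho^{-1}(S)\bigr)\in\mathcal S$; and surjectivity of $\rho$ gives the exact identity $\rho\bigl(\rho^{-1}(S)\bigr)=S$, whence $S\in\mathcal S$.

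I do not anticipate any genuine obstacle here: all the substance was already carried by Lemma \ref{lem:LiftToE4} (and, through it, by Theorem \ref{thm:Bogoliouboff}), and Proposition \ref{prop:Real} merely axiomatizes the two properties of chromatic recurrence that the proof of Proposition \ref{prop:RelativeKatznelson4Diff} actually used. One point worth recording is that this streamlined argument never invokes upward-closedness of $\mathcal S$: since $\rho$ is onto, the last step rests on the identity $\rho\bigl(\rho^{-1}(S)\bigr)=S$ rather than on an inclusion, so hypothesis (ii) is really needed only for $\mathcal R$. The sole caveat is the degenerate case of finite $G$, where Lemma \ref{lem:LiftToE4} and Theorem \ref{thm:Bogoliouboff} are stated for countably infinite groups; the substantive content of the proposition is the countably infinite case treated above, and the finite case, if desired, can be dispatched separately.
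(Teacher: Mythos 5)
Your proof is correct and is exactly the paper's argument: the paper proves Proposition \ref{prop:Real} precisely by the ``trivial modification'' of the proof of Proposition \ref{prop:RelativeKatznelson4Diff} that you carry out (lift via Observation \ref{obs:Ind} and Lemma \ref{lem:LiftToE4}, place the lift in $\mathcal R$ by hypothesis, use upward closedness of $\mathcal R$, then push forward with hypothesis (i) and $\rho\bigl(\rho^{-1}(S)\bigr)=S$). Your side remarks are also accurate: upward closedness of $\mathcal S$ is never invoked in this route, and the finite-$G$ degenerate case (where Lemma \ref{lem:LiftToE4} and Theorem \ref{thm:Bogoliouboff} do not apply as stated) is an edge case the paper's own proof leaves equally untreated, though it can be handled by a direct pigeonhole argument on $\mathcal E_1$.
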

Proposition \ref{prop:RelativeKatznelson4Diff} is recovered by considering the special case of Proposition \ref{prop:Real} where $\mathcal R$ is the collection of sets of chromatic recurrence in $\mathbb Z^\omega$ and $\mathcal S$ is the collection of sets of chromatic recurrence in an arbitrary countable abelian group $G$.

\section{Special cases in \texorpdfstring{$\mathbb F_p^\omega$}{Fpw}}\label{sec:SpecialCases}

Variants of Proposition \ref{prop:RelativeKatznelson4Diff} can be obtained from variants of Theorem \ref{thm:Bogoliouboff}; the resulting statements are especially appealing in vector spaces over finite fields.

Fix a prime $p$, let $\mathbb F_p$ be the additive group $\mathbb Z/p\mathbb Z$, and let $\mathbb F_p^\omega$ denote the direct sum of countably many copies of $\mathbb F_p$.  We adopt the usual presentation of $\mathbb F_p^\omega$: elements are written as $(x_1,x_2,x_3,\dots)$, where each $x_i\in \mathbb F_p$, and $\mb e_1 = (1,0,0,\dots,)$, $\mb e_2 = (0,1,0,\dots)$, $\dots$ forms the standard basis.

\begin{observation}\label{obs:BohrIsFiniteIndex} Since every homomorphism $\rho:\mathbb F_p^\omega \to \mathbb T^d$ takes values in the finite subgroup of $\mathbb T^d$ consisting of elements of order $p$, the Bohr neighborhoods of $0$ in $\mathbb F_p^\omega$ are exactly the sets containing a finite index subgroup of $\mathbb F_p^\omega$.  It follows that $S\subseteq  \mathbb F_p^\omega$ is a set of Bohr recurrence if and only if $S$ has nonempty intersection with every finite index subgroup of $\mathbb F_p^\omega$.
\end{observation}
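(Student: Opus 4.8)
The plan is to establish the three assertions in turn: the parenthetical claim about images of homomorphisms, then the characterization of Bohr neighborhoods, and finally the recurrence criterion, which will follow formally from the second. The whole argument is a matter of unwinding Definitions \ref{def:Recurrence} and \ref{def:BohrNhood}, together with the elementary structure of $\mathbb{F}_p^\omega$.

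First I would record the torsion observation. Every $x\in\mathbb{F}_p^\omega$ satisfies $px=0$, so for any homomorphism $\rho:\mathbb{F}_p^\omega\to\mathbb{T}^d$ we have $p\,\rho(x)=\rho(px)=0$; hence $\rho(x)$ lies in the $p$-torsion subgroup $\Lambda:=(\tfrac{1}{p}\mathbb{Z}/\mathbb{Z})^d$ of $\mathbb{T}^d$, a finite group of order $p^d$ isomorphic to $\mathbb{F}_p^d$. In particular $\ker\rho=\rho^{-1}(0)$ has index at most $p^d$ in $\mathbb{F}_p^\omega$. This single fact drives everything.

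Next, for the characterization of Bohr neighborhoods I would prove both inclusions. A Bohr neighborhood of $0$ contains $\rho^{-1}(U)$ for some $\rho:\mathbb{F}_p^\omega\to\mathbb{T}^d$ and some neighborhood $U$ of $0$; since $0\in U$, it contains $\rho^{-1}(0)=\ker\rho$, which has finite index by the previous step, giving the forward direction. For the converse, given a finite index subgroup $K$, the quotient $\mathbb{F}_p^\omega/K$ is a finite abelian group of exponent $p$, hence isomorphic to $\mathbb{F}_p^d$ for some $d$. Composing the quotient map with an isomorphism $\mathbb{F}_p^d\cong\Lambda\subseteq\mathbb{T}^d$ produces a homomorphism $\rho$ with $\ker\rho=K$; choosing $U$ small enough that $U\cap\Lambda=\{0\}$ yields $\rho^{-1}(U)=\ker\rho=K$, so $K$ is itself a Bohr neighborhood of $0$. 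Any set containing $K$ then contains $\rho^{-1}(U)$ and is therefore also a Bohr neighborhood, completing the characterization.

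For the recurrence statement I would invoke the remark following Definition \ref{def:BohrNhood}: $S$ is a set of Bohr recurrence if and only if $S\cap B\neq\varnothing$ for every Bohr neighborhood $B$ of $0$. Combined with the characterization just proved, both directions are immediate: if $S$ meets every finite index subgroup then it meets every Bohr neighborhood, since each such neighborhood contains a finite index subgroup; conversely each finite index subgroup is itself a Bohr neighborhood, so a Bohr recurrent $S$ must meet it. The only step with any content is the converse inclusion in the characterization of Bohr neighborhoods, namely realizing an arbitrary finite index subgroup $K$ as a kernel $\rho^{-1}(0)=\rho^{-1}(U)$ of a homomorphism into a torus; this rests on recognizing $\mathbb{F}_p^\omega/K$ as an elementary abelian $p$-group and embedding it into the $p$-torsion $\Lambda$ of $\mathbb{T}^d$. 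Everything else is definitional.
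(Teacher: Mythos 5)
Your proof is correct and follows the same route as the paper, which presents this as an observation justified precisely by the torsion fact you start from: every homomorphism $\rho:\mathbb F_p^\omega\to\mathbb T^d$ lands in the finite $p$-torsion subgroup, so $\ker\rho$ has finite index, and conversely every finite index subgroup arises as such a kernel since the quotient is an elementary abelian $p$-group embeddable in the torus. You have simply written out in full the details the paper leaves implicit, including the one genuinely non-definitional step (realizing $K$ as $\rho^{-1}(U)$ by separating $0$ from the rest of the finite image), and that step is handled correctly.
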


For a finite subset $F\subseteq  \mathbb N$, let $\mb e_F:=\sum_{n\in F} \mb e_n$. Let $\mathcal E_{d}^{(p)}:=\{\mb e_F : F\subseteq  \mathbb N, |F| = d\}$.  The following conjecture is a variant of Conjecture \ref{conj:SpecialKatznelson4Z}.

\begin{conjecture}\label{conj:KatznelsonEp}
    Let $p$ be prime and let $d\in \mathbb N$ be divisible by $p$. Every subset of $\mathcal E_d^{(p)}$ which is a set of Bohr recurrence is also set of chromatic recurrence.
\end{conjecture}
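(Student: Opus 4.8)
The plan is to translate both recurrence properties for $S\subseteq \mathcal E_d^{(p)}$ into combinatorial statements about the $d$-uniform hypergraph $\mathcal F_S:=\{F\subseteq \mathbb N: \mb e_F\in S\}$ on the vertex set $\mathbb N$, and then to attack the resulting purely combinatorial implication. For the first translation I would use Observation \ref{obs:BohrIsFiniteIndex}: $S$ is a set of Bohr recurrence if and only if $S$ meets every finite-index subgroup of $\mathbb F_p^\omega$, equivalently every finite-codimension $\mathbb F_p$-subspace. Writing such a subspace as $\ker\Phi$ for a linear map $\Phi:\mathbb F_p^\omega\to \mathbb F_p^k$ and setting $\gamma(n):=\Phi(\mb e_n)$, the condition $\mb e_F\in\ker\Phi$ becomes $\sum_{n\in F}\gamma(n)=0$. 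Thus $S$ is Bohr recurrent exactly when $\mathcal F_S$ has the \emph{zero-sum property}: for every $k$ and every $\gamma:\mathbb N\to\mathbb F_p^k$ there is an edge $F\in\mathcal F_S$ with $\sum_{n\in F}\gamma(n)=0$. The hypothesis $p\mid d$ enters here: it forces $\mathcal E_d^{(p)}$ into the index-$p$ subgroup $\{\mathbf x:\sum_i x_i=0\}$, so that a nonempty Bohr-recurrent $S$ can exist at all, and it guarantees that any edge on which $\gamma$ is constant is automatically zero-sum.

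Next I would record that $S$ is a set of chromatic recurrence if and only if the Cayley graph $\Cay(\mathbb F_p^\omega,S)$ admits no finite proper coloring, i.e. has infinite chromatic number. Two facts then come for free. First, since $p\mid d$, monochromatic edges of $\mathcal F_S$ are zero-sum, so infinite chromatic number of the hypergraph $\mathcal F_S$ already implies the zero-sum property; this links the present setting to the hypergraph chromatic numbers appearing in item (iii) of the abstract. Second, the smallest case $p=d=2$ is already settled, because $\mathcal E_2^{(2)}=\{\mb e_i+\mb e_j:i\neq j\}\subseteq \mathcal E_1-\mathcal E_1$ is contained in the difference set of the independent, hence $I_0$, set $\mathcal E_1$, so Proposition \ref{prop:RelativeKatznelson2Diff} applies verbatim. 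I would also attempt to reduce an arbitrary $d$ divisible by $p$ to the base case $d=p$.

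The decisive step is the implication from the zero-sum property to infinite Cayley chromatic number. I would argue by contraposition: starting from a finite proper coloring $f:\mathbb F_p^\omega\to\{1,\dots,r\}$ of $\Cay(\mathbb F_p^\omega,S)$, I would try to manufacture a single coloring $\gamma:\mathbb N\to\mathbb F_p^k$ admitting no zero-sum edge of $\mathcal F_S$, thereby refuting the zero-sum property. The natural route is to restrict $f$ to the finite coordinate subspaces $\mathbb F_p^I$ (with $I\subseteq\mathbb N$ finite), pass by a compactness or pigeonhole argument over translates to a more structured coloring, and then linearize that coloring into the desired functional-valued $\gamma$.

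I expect this linearization to be the main obstacle, for two compounding reasons. An arbitrary finite coloring of $\mathbb F_p^\omega$ carries no a priori algebraic structure, so extracting a linear witness $\gamma$ from it is exactly the kind of inverse problem that is typically hard. Moreover, even within the hypergraph picture the zero-sum property is strictly weaker than infinite hypergraph chromatic number once $p\geq 3$: a proper $p$-coloring may still contain rainbow zero-sum edges, such as a triple colored $\{0,1,2\}$ in $\mathbb F_3$. Hence the clean hypergraph statement of item (iii) does not by itself yield the conjecture, and one is forced to work with the finer zero-sum structure directly. This gap is, I suspect, the genuine content that keeps the statement at the level of a conjecture.
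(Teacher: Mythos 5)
Be aware of this statement's status in the paper: it is a \emph{conjecture}, not a theorem. The paper proves no nontrivial instance of it (it says so explicitly after Lemma \ref{lem:EpContinuity}), and in \S\ref{sec:DashHopes} it even proposes the Bohr recurrent set $S_{\square}\subseteq \mathcal E_4^{(2)}$ as a potential \emph{counterexample} to Conjecture \ref{conj:KatznelsonFp}. Your proposal, as you concede in your final paragraph, is not a proof either. The preliminary translations are correct: Bohr recurrence of $S\subseteq \mathcal E_d^{(p)}$ is equivalent to your zero-sum property of $\mathcal F_S$ (this is Observation \ref{obs:BohrIsFiniteIndex} plus the fact that finite-index subgroups of $\mathbb F_p^\omega$ are kernels of homomorphisms to $\mathbb F_p^k$), chromatic recurrence is equivalent to $\Cay(\mathbb F_p^\omega,S)$ having infinite chromatic number, and the case $p=d=2$ does follow from Proposition \ref{prop:RelativeKatznelson2Diff}. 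But your ``decisive step'' --- zero-sum property implies infinite Cayley chromatic number --- is not a step of a proof; it is a verbatim restatement of the conjecture, and the proposed mechanism (contraposition, then ``linearizing'' an arbitrary proper coloring into a witness $\gamma:\mathbb N\to\mathbb F_p^k$) is exactly the missing content, with no argument supplied. The same goes for the hoped-for reduction from general $d$ to $d=p$: no mechanism is given, and for $p=2$ the paper shows in \S\ref{sec:DashHopes} that the one available method (restricting the Cayley graph to the vertex set $\mathcal E_2^{(2)}$, as in Proposition \ref{prop:RelativeKatznelson2Diff}) provably fails for $d=4$, since $\Cay(\mathcal E_2^{(2)},S_{\square})$ has chromatic number $2$.

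There is also a concrete mathematical error in your combinatorial analysis. You claim that for $p\geq 3$ the zero-sum property is strictly weaker than infinite hypergraph chromatic number, citing rainbow zero-sum triples such as one colored $\{0,1,2\}$ in $\mathbb F_3$, and you conclude that the hypergraph formulation (item (iii) of the abstract) ``does not by itself yield the conjecture.'' For $d=p$ this is false and contradicts Lemma \ref{lem:EpContinuity}: the zero-sum property quantifies over vector-valued colorings $\gamma:\mathbb N\to\mathbb F_p^k$ for \emph{every} $k$, not only scalar ones. If $C_1,\dots,C_r$ is a partition of $\mathbb N$ with no edge of $\mathcal F_S$ inside a single cell, set $\gamma(n):=(1_{C_1}(n),\dots,1_{C_r}(n))\in\mathbb F_p^r$; any edge $F$ with $|F|=p$ meets some $C_j$ in $m$ elements with $0<m<p$, so the $j$th coordinate of $\sum_{n\in F}\gamma(n)$ is $m \bmod p\neq 0$, and $F$ is not zero-sum. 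Hence for $p$-uniform hypergraphs the zero-sum property, partition regularity, and infinite hypergraph chromatic number all coincide; your rainbow example only refutes the scalar ($k=1$) version. Consequently the hypergraph conjecture at the end of \S\ref{sec:SpecialCases} is exactly equivalent to the present conjecture for $d=p$, and, via Proposition \ref{prop:KEpImpliesKFp}, to Conjecture \ref{conj:KatznelsonFp} for odd $p$; you are not ``forced to work with the finer zero-sum structure.'' (Strict weakness does occur for $d>p$: for $d=2p$, the $2p$-sets meeting the even and odd integers in exactly $p$ elements each form a hypergraph with the zero-sum property and chromatic number $2$.) This correction simplifies your target, but it does not close the gap: the passage from infinite hypergraph chromatic number to infinite Cayley chromatic number remains precisely the open problem.
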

When $p$ does not divide $d$, $\mathcal E_d^{(p)}$ is not a set of Bohr recurrence, since it has empty intersection with the index $p$ subgroup $\{x\in \mathbb F_p^\omega: \sum_{i\in \mathbb N} x_i = 0\}$.

Here is the special case of Conjecture \ref{conj:KatznelsonGeneral} for $\mathbb F_p^\omega$.

\begin{conjecture}\label{conj:KatznelsonFp}
  Every subset of $\mathbb F_p^\omega$ which is a set of Bohr recurrence is a set of chromatic recurrence.
\end{conjecture}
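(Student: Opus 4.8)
The plan is to carry out the reduction of Proposition~\ref{prop:RelativeKatznelson4Diff} inside $\mathbb F_p^\omega$ and then translate the resulting special case into a question about colorings of hypergraphs, leaving a single combinatorial statement as the obstacle. Throughout I use Observation~\ref{obs:BohrIsFiniteIndex}, so ``Bohr recurrence'' means ``meets every finite index subgroup,'' and a set $S$ is chromatically recurrent exactly when every finite coloring of $\mathbb F_p^\omega$ has two points of the same color differing by an element of $S$.

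First I would reduce to subsets of $\mathcal E_p^{(p)}$. For this I need the characteristic-$p$ analogue of Theorem~\ref{thm:Bogoliouboff}: if $\mathbb F_p^\omega = A_1\cup\cdots\cup A_r$, then for some $j$ the set $\{a_1+\cdots+a_p : a_1,\dots,a_p\in A_j \text{ mutually distinct}\}$ contains a finite index subgroup. (The exponent must be divisible by $p$, since only then does a ``monochromatic'' configuration $a+\cdots+a=pa=0$ land in a subgroup; $d=p$ is the smallest admissible value, matching the divisibility requirement in Conjecture~\ref{conj:KatznelsonEp}.) Granting this variant, the arguments of Lemmas~\ref{lem:LiftToE4} and~\ref{lem:Image} transfer with $\Delta_2$ replaced by the $p$-fold sum operation and $\mathcal E_1$ replaced by the standard basis $\{\mb e_n\}$: enumerating $\mathbb F_p^\omega = \{g_1,g_2,\dots\}$ and setting $\rho(\mb e_n)=g_n$, one pulls a Bohr recurrent $S$ back to a Bohr recurrent subset of $\rho^{-1}(S)\cap\mathcal E_p^{(p)}$, applies the case $d=p$ of Conjecture~\ref{conj:KatznelsonEp}, and pushes chromatic recurrence forward. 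I expect this to be the content of Proposition~\ref{prop:KEpImpliesKFp}, which shows that the case $d=p$ of Conjecture~\ref{conj:KatznelsonEp} implies Conjecture~\ref{conj:KatznelsonFp}.

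Next I would set up the dictionary $S\subseteq\mathcal E_p^{(p)} \leftrightarrow \mathcal F=\{F : \mb e_F\in S\}$, a $p$-uniform hypergraph on $\mathbb N$, and verify the two translations that make it faithful; I expect these to be essentially Lemma~\ref{lem:EpContinuity}. On the Bohr side, $S$ is a set of Bohr recurrence if and only if $\mathcal F$ has infinite chromatic number: a finite index subgroup is $\ker\rho$ for some $\rho\colon\mathbb F_p^\omega\to\mathbb F_p^k$ with $\rho(\mb e_n)=v_n$, and since $|F|=p$ one has $\sum_{n\in F}v_n=0$ whenever the $v_n$, $n\in F$, are all equal (because $pv=0$); so a monochromatic hyperedge for the coloring $n\mapsto v_n$ produces an element of $S\cap\ker\rho$, and conversely specializing to colorings valued in the standard basis of $\mathbb F_p^k$ recovers ordinary monochromaticity. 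On the chromatic side, $S$ is a set of chromatic recurrence if and only if the Cayley graph $\Cay(\mathbb F_p^\omega, S\cup(-S))$ has infinite chromatic number, which is immediate from the definition. Combining the two translations, the case $d=p$ of Conjecture~\ref{conj:KatznelsonEp} is equivalent to the assertion that a $p$-uniform hypergraph of infinite chromatic number always yields a Cayley graph of infinite chromatic number.

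This last implication is where I expect the real difficulty to lie. A proper finite coloring of $\Cay(\mathbb F_p^\omega, S\cup(-S))$ colors the entire group while avoiding monochromatic pairs $x,y$ with $x-y=\pm\mb e_F$, whereas a proper coloring of $\mathcal F$ colors only the coordinate set $\mathbb N$; the group coloring has enormously more freedom, and the crux is to prove that this freedom cannot turn an infinite chromatic number into a finite one. I see no way to control such global colorings directly, so rather than assert an unconditional proof I would isolate the hypergraph-to-Cayley-graph implication as the reduced combinatorial problem (the hypothesis of item~(iii) of the abstract) and record that it, together with Proposition~\ref{prop:KEpImpliesKFp} and the dictionary of the previous paragraph, yields Conjecture~\ref{conj:KatznelsonFp}.
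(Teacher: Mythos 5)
Conjecture \ref{conj:KatznelsonFp} is left open in the paper, so there is no unconditional proof to compare against; the honest form of an ``attempt'' is exactly what you give, namely a conditional reduction, and yours coincides with the paper's own. The characteristic-$p$ Bogoliouboff variant you postulate is Lemma \ref{lem:BogFp} (imported from the literature); your lifting of a Bohr recurrent set to a Bohr recurrent subset of $\rho^{-1}(S)\cap\mathcal E_d^{(p)}$ is Lemma \ref{lem:LiftToEp}; pushing chromatic recurrence forward along $\rho$ is Lemma \ref{lem:Image}; the assembled implication is Proposition \ref{prop:KEpImpliesKFp}; your two-way dictionary between subsets of $\mathcal E_p^{(p)}$ and $p$-uniform hypergraphs is Lemma \ref{lem:EpContinuity}; and the residual combinatorial statement you isolate is precisely the closing conjecture of \S\ref{sec:SpecialCases} (item (iii) of the abstract). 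For odd primes $p$, your reduction is correct and is the paper's route.

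The genuine error is your parenthetical claim that $d=p$ is always the smallest admissible exponent: this fails for $p=2$. Lemma \ref{lem:BogFp}, and hence Proposition \ref{prop:KEpImpliesKFp}, requires $d>2$, and the paper explicitly records that it cannot prove that the $d=p=2$ case of Conjecture \ref{conj:KatznelsonEp} implies Conjecture \ref{conj:KatznelsonFp}. The gap is not a removable technicality. Indeed, the $d=p=2$ case of Conjecture \ref{conj:KatznelsonEp} is already a theorem: it is a special case of Proposition \ref{prop:RelativeKatznelson2Diff}, since $\mathcal E_2^{(2)}\subseteq \mathcal E_1^{(2)}-\mathcal E_1^{(2)}$ and $\mathcal E_1^{(2)}$ is an $I_0$ set. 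So if your $d=p$ reduction were valid at $p=2$, Katznelson's problem for $\mathbb F_2^\omega$ would be settled unconditionally, contradicting the fact (stated in \S\ref{sec:Intro}) that the answer is unknown for every countably infinite abelian group. For $p=2$ the reduction must instead pass through $d=4$, and there the dictionary of Lemma \ref{lem:EpContinuity} breaks down: its converse direction uses $|F|=p$ essentially, since a $4$-element hyperedge split $2+2$ between two cells of a partition has both intersections congruent to $0 \bmod 2$, defeating the counting homomorphisms $\psi_j$. This is exactly why the paper's final Cayley-graph conjecture is stated for odd primes only; you should restrict your write-up the same way.
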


Clearly Conjecture \ref{conj:KatznelsonFp} implies Conjecture \ref{conj:KatznelsonEp}.  The purpose of this section is to prove the converse (assuming $d>2$), and to prove Lemma \ref{lem:EpContinuity}, an appealing characterization of the Bohr recurrent subsets of $\mathcal E_p^{(p)}$.

\begin{proposition}\label{prop:KEpImpliesKFp}
  Let $p$ be prime and let $d>2$ be divisible by $p$.  The special case of Conjecture \ref{conj:KatznelsonEp} for this $d$ and $p$ implies the special case of Conjecture \ref{conj:KatznelsonFp} for the same $p$.
\end{proposition}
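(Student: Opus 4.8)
The plan is to run the proof of Proposition \ref{prop:RelativeKatznelson4Diff} in $\mathbb F_p^\omega$, with $\mathcal E_d^{(p)}$ playing the role that $\Delta_2(\mathcal E_1)$ played in $\mathbb Z^\omega$. Fix a set $S\subseteq \mathbb F_p^\omega$ of Bohr recurrence; I want to show $S$ is a set of chromatic recurrence. Let $H$ be a second copy of $\mathbb F_p^\omega$ with standard basis $(\mb e_n)$, and using the evident analogue of Observation \ref{obs:Ind} for $\mathbb F_p^\omega$ (any function on a basis extends to a homomorphism) together with countability of $\mathbb F_p^\omega$, choose a homomorphism $\rho:H\to \mathbb F_p^\omega$ whose restriction to $(\mb e_n)$ surjects onto $\mathbb F_p^\omega$, so that $\rho$ is surjective. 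The central goal is to show that $\rho^{-1}(S)\cap \mathcal E_d^{(p)}$ is a set of Bohr recurrence in $H$. Granting this, the hypothesis (Conjecture \ref{conj:KatznelsonEp} for this $d$ and $p$) makes it a set of chromatic recurrence; upward-closedness (Remark \ref{rem:Real}) promotes this to $\rho^{-1}(S)$; and Lemma \ref{lem:Image} applied to the surjection $\rho$, together with $\rho(\rho^{-1}(S))=S$, yields that $S$ is a set of chromatic recurrence.

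To prove $\rho^{-1}(S)\cap \mathcal E_d^{(p)}$ is Bohr recurrent I use Observation \ref{obs:BohrIsFiniteIndex}: it suffices to meet an arbitrary finite index subgroup $K=\ker\psi$ of $H$, where $\psi:H\to L$ is a homomorphism onto a finite group of exponent $p$. Partition the index set by the values $\psi(\mb e_n)$, writing $N_v=\{n:\psi(\mb e_n)=v\}$ for $v\in L$, and set $A_v:=\rho(\{\mb e_n:n\in N_v\})$; since $\rho$ surjects on the basis, the finitely many $A_v$ cover $\mathbb F_p^\omega$. The key simplification over Lemma \ref{lem:LiftToE4} is that I will choose all $d$ basis vectors from a \emph{single} class $N_{v_0}$: their $\psi$-images then sum to $d\,v_0=0$, because $d\equiv 0\pmod p$ and $v_0$ has order dividing $p$, so the resulting $\mb e_F$ lands automatically in $K$. (This is the exact analogue of the coefficient sum $1-1-1+1=0$ exploited in Lemma \ref{lem:LiftToE4}.) Thus I only need one class $A_{v_0}$ whose set of sums of $d$ distinct elements meets $S$.

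This reduces everything to a vector-space analogue of Theorem \ref{thm:Bogoliouboff}, which I expect to be the main obstacle: \emph{if $\mathbb F_p^\omega$ is covered by finitely many sets $A_1,\dots,A_r$, then for some $j$ the set $\Sigma_d(A_j):=\{a_1+\cdots+a_d : a_1,\dots,a_d\in A_j \text{ mutually distinct}\}$ is a Bohr neighborhood of $0$}, i.e.\ contains a finite index subgroup. Granting this for the relevant class $A_{v_0}$, Bohr recurrence of $S$ produces distinct $a_1,\dots,a_d\in A_{v_0}$ with $\sum a_i\in S$; lifting each $a_i$ to an index in $N_{v_0}$ (distinctness of the $a_i$ forces the indices distinct) gives $\mb e_F\in\mathcal E_d^{(p)}\cap K$ with $\rho(\mb e_F)=\sum a_i\in S$, which closes the lifting step. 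For the variant itself, when $p=2$ and $d=4$ it is immediate, since $-1=1$ in $\mathbb F_2$ makes $\Sigma_4(A)=\Delta_2(A)$ and Theorem \ref{thm:Bogoliouboff} applies verbatim; I expect this is the reason for the hypothesis $d>2$, as for $d=2$ the set of sums of distinct pairs is essentially a difference set and need not contain a subgroup. For general $p$ I would establish the variant by a Fourier/density argument in the finite spaces $\mathbb F_p^N$: taking the densest part, a $d$-fold convolution estimate (with $d\geq 3$ supplying enough smoothing and the distinct-element correction being of strictly lower order) shows its $d$-fold sumset contains a subspace of codimension bounded uniformly in $N$, where the hypothesis $p\mid d$ is exactly what centers the sumset at $0$ so that the subspace passes through the origin; a compactness argument over subgroups of bounded index then passes to a genuine finite index subgroup of $\mathbb F_p^\omega$. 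Proving this uniform-codimension estimate for $\Sigma_d$ and justifying the passage to the limit is where the real work lies.
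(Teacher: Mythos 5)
Your reduction is exactly the paper's. The choice of $\rho$ surjective on the basis, the partition of the basis indices according to the finite-index subgroup data, the observation that taking all $d$ summands from a single cell forces the sum into the subgroup because $p\mid d$ and the ambient group has exponent $p$, the lifting of distinct $a_i$ to distinct basis vectors, and the final push-forward through Lemma \ref{lem:Image} plus upward closure: this is precisely the paper's Lemma \ref{lem:LiftToEp} and its proof of Proposition \ref{prop:KEpImpliesKFp}. You have also correctly isolated the one hard ingredient, namely the covering statement that some cell $A_j$ has its $d$-fold sumset with mutually distinct summands, your $\Sigma_d(A_j)$, equal to a Bohr neighborhood of $0$. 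In the paper this is Lemma \ref{lem:BogFp}. Your observation that for $p=2$, $d=4$ this is literally Theorem \ref{thm:Bogoliouboff}, since $\Sigma_4(A)=\Delta_2(A)$ in characteristic $2$, is correct, so for that one pair $(p,d)$ your proof is complete.

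The gap is the general case of that lemma, which you acknowledge but do not close. The paper does not prove it either: it quotes it as a special case of \cite[Theorem 1.3]{GrBogoliouboff}, a theorem about sets of positive upper Banach density whose proof is the subject of a separate paper, so what closes your argument is a citation, not a routine computation. Your sketch for odd $p$ is not adequate as it stands. First, the limiting step hides a real difficulty: a covering of $\mathbb F_p^\omega$ induces coverings of the finite subspaces $\mathbb F_p^N$, but the dense cell and the bounded-codimension subspace you extract may vary with $N$, and having a subspace of codimension at most $C$ inside $\Sigma_d(A_j)\cap\mathbb F_p^N$ for every $N$ does not by itself produce a finite-index subgroup of the infinite group $\mathbb F_p^\omega$ inside $\Sigma_d(A_j)$; the known arguments work directly in the infinite group with an invariant mean, in the spirit of F{\o}lner's proof recalled in Section \ref{sec:Folner}. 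Second, the distinct-summand correction is not simply ``lower order'': handling it is exactly the issue the paper flags in the remark following Lemma \ref{lem:BogFp}, where the removal of the ``$\cup\{0\}$'' from the cited theorem requires going back into that theorem's proof. So the architecture is right and identical to the paper's, but the load-bearing lemma is asserted rather than established; to have a complete proof you should invoke \cite[Theorem 1.3]{GrBogoliouboff} (as the paper does) rather than attempt the Fourier-plus-compactness route.
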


\begin{remark}
  Conjecture \ref{conj:KatznelsonEp} is not vacuous: the set $\mathcal E_p^{(p)}$ itself is a set of Bohr recurrence, which can be verified directly, or viewed as a consequence of Lemma \ref{lem:LiftToEp}, or of Lemma \ref{lem:EpContinuity}.
\end{remark}

\begin{remark}
  We are currently unable to prove that the special case of Conjecture \ref{conj:KatznelsonEp} with $d=p=2$ implies Conjecture \ref{conj:KatznelsonFp}.  This special case of Conjecture \ref{conj:KatznelsonEp} is also a special case of Proposition \ref{prop:RelativeKatznelson2Diff}, since $\mathcal E_1^{(2)}$ is an $I_0$ set (as it is independent), and $\mathcal E_2^{(2)}\subseteq \mathcal E_1^{(2)}-\mathcal E_1^{(2)}$.
\end{remark}

We will use the following special case of \cite[Theorem 1.3]{GrBogoliouboff}.

\begin{lemma}\label{lem:BogFp}
  Let $p$ be prime and  let $d>2$ be divisible by $p$. If $A\subseteq  \mathbb F_p^\omega$ has $d^*(A)>0$,  then the $d$-fold sumset with distinct summands
  \[d\verb!^!A:= \{a_1+a_2+\cdots + a_d: a_i\in A \textup{ are mutually distinct}\}\]
   is a Bohr neighborhood of $0$.  Consequently, if $\mathbb F_p^\omega$ is covered by finitely many sets $A_1$,$\dots$, $A_r$, then for some $j$ the set $d\verb!^!A_j$ is a Bohr neighborhood of $0$.
\end{lemma}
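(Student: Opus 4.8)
The first sentence of the lemma is stated as a special case of \cite[Theorem 1.3]{GrBogoliouboff}, so the plan is to invoke that theorem directly rather than reprove it: I would specialize the ambient group there to $\mathbb F_p^\omega$ and the number of summands to $d$, and check that the hypotheses $d>2$ and $p\mid d$ are exactly those required for the conclusion that the distinct-summand sumset $d^{\wedge}A$ is a Bohr neighborhood of $0$ (and not merely a translate of one; the divisibility $p\mid d$ plays the same role here that it plays for $\mathcal E_d^{(p)}$ in the discussion following Conjecture \ref{conj:KatznelsonEp}). No new argument is needed for this part.

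For the ``consequently'' clause the plan is a short pigeonhole on upper Banach density. First I would record that $d^*$ is finitely subadditive: since $1_{\bigcup_j A_j}\le \sum_j 1_{A_j}$ pointwise and each invariant mean $\lambda$ on $\ell^{\infty}(\mathbb F_p^\omega)$ is a positive linear functional, every such $\lambda$ satisfies $\lambda(1_{\bigcup_j A_j})\le \sum_j \lambda(1_{A_j})\le \sum_j d^*(A_j)$, and taking the supremum over $\lambda$ gives $d^*(\bigcup_j A_j)\le \sum_{j=1}^r d^*(A_j)$. Next, because the $A_j$ cover $\mathbb F_p^\omega$ and every mean is normalized so that $\lambda(1)=1$, one has $1=d^*(\mathbb F_p^\omega)\le \sum_{j=1}^r d^*(A_j)$, whence $d^*(A_j)\ge 1/r>0$ for some $j\le r$. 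Applying the first assertion to that $A_j$ shows that $d^{\wedge}A_j$ is a Bohr neighborhood of $0$, as desired.

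Since the substance of the lemma lives entirely in the cited theorem, I do not expect a genuine obstacle here; the only points meriting a moment's care are the two elementary properties of $d^*$ used above (finite subadditivity and $d^*(\mathbb F_p^\omega)=1$), both immediate from the definition of $d^*$ as a supremum over invariant means and recorded in \cite{BergelsonMcCutcheon} and \cite{BBF}. If one preferred a self-contained argument avoiding \cite{GrBogoliouboff}, the real difficulty would be reproving that theorem, i.e.\ exhibiting a finite-index subgroup inside $d^{\wedge}A$ by a Fourier-analytic, Bogoliubov-type argument on $\mathbb F_p^\omega$ together with the density-increment mechanics behind the distinct-summand refinement; but that is precisely the work the citation is meant to absorb.
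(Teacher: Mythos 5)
Your overall route is the same as the paper's: the first assertion is delegated to \cite[Theorem 1.3]{GrBogoliouboff}, and the ``consequently'' clause is handled by the standard pigeonhole for upper Banach density (finite subadditivity of $d^*$ plus $d^*(\mathbb F_p^\omega)=1$), which the paper leaves implicit and which you carry out correctly.

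However, your claim that ``no new argument is needed'' for the first assertion hides a genuine gap. The statement of \cite[Theorem 1.3]{GrBogoliouboff} does not conclude that $d^{\wedge}A$ is a Bohr neighborhood of $0$; it concludes this for $(d^{\wedge}A)\cup\{0\}$. The difference is not cosmetic: by Definition \ref{def:BohrNhood} every Bohr neighborhood of $0$ contains $0$ (it contains $\rho^{-1}(U)$ with $0\in U$), and nothing guarantees a priori that $0$ is a sum of $d$ mutually distinct elements of $A$, so one cannot pass from ``$(d^{\wedge}A)\cup\{0\}$ is a Bohr neighborhood of $0$'' to ``$d^{\wedge}A$ is a Bohr neighborhood of $0$'' by quoting the statement alone. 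The paper closes this gap with a remark that inspects the cited proof: the function $I$ constructed in the proof of \cite[Lemma 1.8]{GrBogoliouboff} satisfies $\{t: I(t)>0\}\subseteq\{c_1g_1+\cdots+c_dg_d : g_i\in G \text{ mutually distinct}\}$, and $\{t: I(t)>0\}$ contains a Bohr neighborhood of $0$, so the ``$\cup\{0\}$'' can in fact be dropped. This strengthening is exactly what the downstream application requires: in Lemma \ref{lem:LiftToEp} one must produce $s\in S$ that is literally a sum of $d$ mutually distinct elements of some $A_j$; if $S$ met $(d^{\wedge}A_j)\cup\{0\}$ only in the point $0$, that argument would fail. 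So your plan is right in outline, but the checking step you defer (``check that the hypotheses are exactly those required'') is aimed at the hypotheses, whereas the real discrepancy sits in the conclusion of the cited theorem and requires going into its proof, not just its statement.
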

\begin{remark} The statement of  \cite[Theorem 1.3]{GrBogoliouboff} uses $(d\verb!^!A) \cup \{0\}$ in place of $d\verb!^!A$ defined above.  However, the proof of \cite[Theorem 1.3]{GrBogoliouboff} implies that the ``$\cup\{0\}$'' can be omitted.  To see this, note that the proof of \cite[Lemma 1.8]{GrBogoliouboff} proves that the function $I$ defined therein satisfies $\{t:I(t)>0\}\subseteq \{c_1g_1+\cdots +c_dg_d : g_i\in G \text{ are mutually distinct} \}$, and that $\{t: I(t)>0\}$ contains a Bohr neighborhood of $0$.
\end{remark}

The proof of Proposition \ref{prop:KEpImpliesKFp} is similar to the proof of Proposition \ref{prop:RelativeKatznelson4Diff}.  It relies on the following variant of Lemma \ref{lem:LiftToE4}, whose proof uses Lemma \ref{lem:BogFp} in place of Theorem \ref{thm:Bogoliouboff}.
\begin{lemma}\label{lem:LiftToEp}
Let $p$ be prime, let $d>2$ be divisible by $p$, and let $\rho: \mathbb F_p^\omega\to \mathbb F_p^\omega$ be a homomorphism such that $\rho(\mathcal E_1^{(p)})=\mathbb F_p^\omega$.  If $S\subseteq  \mathbb F_p^\omega$ is a set of Bohr recurrence, then $\rho^{-1}(S)\cap \mathcal E_d^{(p)}$ is a set of Bohr recurrence, as well.
\end{lemma}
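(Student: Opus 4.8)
The plan is to imitate the proof of Lemma~\ref{lem:LiftToE4} line for line, replacing Theorem~\ref{thm:Bogoliouboff} by Lemma~\ref{lem:BogFp} and using the divisibility hypothesis $p\mid d$ at the single step where the two arguments genuinely differ. The cleanest setup uses Observation~\ref{obs:BohrIsFiniteIndex}: to prove $\rho^{-1}(S)\cap \mathcal E_d^{(p)}$ is a set of Bohr recurrence it suffices to show it meets every finite index subgroup $\Gamma$ of $\mathbb F_p^\omega$. So I fix such a $\Gamma$ and write $\pi\colon \mathbb F_p^\omega\to \mathbb F_p^\omega/\Gamma$ for the quotient map; since $\mathbb F_p^\omega$ has exponent $p$, the finite group $\mathbb F_p^\omega/\Gamma$ also has exponent $p$. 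The goal becomes: produce $\mb e_F\in \mathcal E_d^{(p)}$ with $\rho(\mb e_F)\in S$ and $\pi(\mb e_F)=0$.

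First I partition the basis according to the value of $\pi$: for each $v\in \mathbb F_p^\omega/\Gamma$ let $C_v:=\{\mb e\in \mathcal E_1^{(p)}:\pi(\mb e)=v\}$. These finitely many sets cover $\mathcal E_1^{(p)}$, so the images $A_v:=\rho(C_v)$ cover $\mathbb F_p^\omega$, using the hypothesis $\rho(\mathcal E_1^{(p)})=\mathbb F_p^\omega$. The covering form of Lemma~\ref{lem:BogFp} then supplies a $v$ for which $d\verb!^!A_v$ is a Bohr neighborhood of $0$, and since $S$ is a set of Bohr recurrence it meets this neighborhood: there are mutually distinct $a_1,\dots,a_d\in A_v$ and $s\in S$ with $s=a_1+\cdots+a_d$. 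I then lift each $a_i=\rho(\mb e_{n_i})$ with $\mb e_{n_i}\in C_v$; distinctness of the $a_i$ forces the indices $n_i$ to be distinct, so $F:=\{n_1,\dots,n_d\}$ has size $d$ and $\mb e_F=\sum_i \mb e_{n_i}\in \mathcal E_d^{(p)}$. By construction $\rho(\mb e_F)=\sum_i a_i=s\in S$. Finally, $\pi(\mb e_F)=\sum_i \pi(\mb e_{n_i})=d\cdot v$, which is $0$ because $\mathbb F_p^\omega/\Gamma$ has exponent $p$ and $p\mid d$. Hence $\mb e_F\in \Gamma\cap \rho^{-1}(S)\cap \mathcal E_d^{(p)}$, as required.

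The one substantive point --- and the place where this departs from Lemma~\ref{lem:LiftToE4} --- is the verification that $\pi(\mb e_F)=0$. In Lemma~\ref{lem:LiftToE4} the relevant element $(q_1-q_2)-(q_3-q_4)$ is an \emph{alternating} combination whose image stays small by the triangle inequality because the terms cancel in pairs; here the corresponding element $\sum_i \mb e_{n_i}$ is an unsigned sum of $d$ terms, for which no such cancellation is available and the naive estimate gives nothing. What rescues the argument is structural rather than metric: by grouping the summands to share a common image $v$ and invoking $p\mid d$, the sum collapses to $d\cdot v=0$ in the exponent-$p$ quotient. This is exactly where the divisibility hypothesis is used, and choosing the finite-index formulation of Bohr recurrence (Observation~\ref{obs:BohrIsFiniteIndex}) rather than the $\mathbb T^k$ formulation is what makes this collapse transparent --- in the $\mathbb T^k$ picture one would instead have to refine the partition so that each cell captures a single order-$p$ point of $\mathbb T^k$, a less natural route to the same cancellation.
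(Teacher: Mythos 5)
Your proof is correct and follows essentially the same route as the paper's: both fix a finite index subgroup, partition $\mathcal E_1^{(p)}$ by cosets (your quotient-map formulation $C_v=\pi^{-1}(v)\cap\mathcal E_1^{(p)}$ is just the paper's $C_j=H_j\cap \mathcal E_1^{(p)}$ in different notation), push forward by $\rho$, apply the covering form of Lemma~\ref{lem:BogFp}, lift the distinct summands, and use $p\mid d$ to see that the sum of $d$ basis vectors from a single coset lands in the subgroup. Your closing remark correctly identifies the one step where this diverges from Lemma~\ref{lem:LiftToE4}, which matches the paper's own framing of the lemma as a variant whose proof substitutes Lemma~\ref{lem:BogFp} for Theorem~\ref{thm:Bogoliouboff}.
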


\begin{proof}
  Fix $p$, $d$, $\rho$, and $S$ as in the hypothesis.  We must prove that $S':=\rho^{-1}(S)\cap \mathcal E_d^{(p)}$ is a set of Bohr recurrence, which in this case simply means that $S'$ has nonempty intersection with every finite index subgroup of $\mathbb F_p^{\omega}$.  Fix such a subgroup $H$, and enumerate the cosets of $H$ as $H_1,\dots,H_r$.  Let $C_j:=H_j\cap \mathcal E_1^{(p)}$, so that $\mathcal E_1^{(p)}= C_1\cup \dots \cup C_r$.  Let $A_j:=\rho(C_j)$, $j=1,\dots, r$, so the hypothesis that $\rho(\mathcal E_1^{(p)})=\mathbb F_p^\omega$ implies that the $A_j$ cover $\mathbb F_p^\omega$.  Lemma \ref{lem:BogFp} now implies that for some $j\leq r$, $d\verb!^!A_j$ is a Bohr neighborhood of $0$.  Since $S$ is a set of Bohr recurrence we have that $S\cap d\verb!^!A_j\neq \varnothing$.  In particular, we can find an $s\in S$ and mutually distinct $a_1,\dots, a_d\in A_j$ such that $  s = a_1+\cdots + a_d$.  By the definition of $A_j$, this means there are mutually distinct $c_1,\dots,c_d\in C_j$ such that
\begin{equation}\label{eqn:srhoc}
  s=\rho(c_1)+\cdots + \rho(c_d).
\end{equation}
    By the definition of $C_j$, these $c_i$ all lie in the same coset of $H$, meaning $c_i+H=c_1+H$ for each $i$. Since $p$ divides $d$, we get that $c_1+\dots+c_d$ lies in $H$: to see this write $c_1+\dots + c_d+H = dc_1+H=H$.

  Now $c_i\in \mathcal E_1^{(p)}$ for each $i$, so $c:=c_1+\cdots + c_d\in \mathcal E_{d}^{(p)}$.  Equation (\ref{eqn:srhoc}) shows that $c\in \rho^{-1}(S)$, so we have proved that $\rho^{-1}(S)\cap \mathcal E_{d}^{(p)}\cap H\neq \varnothing$, as desired.
\end{proof}

\begin{proof}[Proof of Proposition \ref{prop:KEpImpliesKFp}]
Fix a prime $p$ and $d>2$ divisible by $p$.  Assuming Conjecture \ref{conj:KatznelsonEp} is true for this $p$ and $d$, we will prove that Conjecture \ref{conj:KatznelsonFp} is true for this $p$, as well. Fix a set of Bohr recurrence $S\subseteq  \mathbb F_p^\omega$.   Let $\rho:\mathbb F_p^\omega\to \mathbb F_p^\omega$ be a homomorphism such that $\rho(\mathcal E_1^{(p)})=\mathbb F_p^\omega$.  By Lemma \ref{lem:LiftToEp}, we have that $S':=\rho^{-1}(S)\cap \mathcal E_d^{(p)}$ is a set of Bohr recurrence.  Our assumption that Conjecture \ref{conj:KatznelsonEp} is true then implies that $S'$ is a set of chromatic recurrence.  Then Lemma \ref{lem:Image} implies $\rho(S')$ also a set of chromatic recurrence, and the containment $\rho(S')\subseteq  S$ implies $S$ is, as well.
\end{proof}

Now we characterize the sets of Bohr recurrence contained in $\mathcal E_p^{(p)}$.  We call a collection $\mathcal F$ of subsets of $\mathbb N$  \emph{partition regular}\footnote{This is an abuse of terminology: we should say that ``containing an element of $\mathcal F$ is a partition regular property of subsets of $\mathbb N$.''} if for every partition of $\mathbb N$ into finitely many cells, at least one of the cells contains an element of $\mathcal F$.

We write $\widehat{\mathbb F}_p^\omega$ for the set of characters of $\mathbb F_p^\omega$, so that $\widehat{\mathbb F}_p^\omega$ is a group under pointwise multiplication.  It is easy to verify that $\widehat{\mathbb F}_p^\omega$ is isomorphic to $\mathbb F_p^{\mathbb N}$, the product of countably many copies of $\mathbb F_p$.  To see this, note that every element $\xi \in \mathbb F_p^{\mathbb N}$ induces a character defined by $\chi(x):=e(\sum_{n\in \mathbb N} x_n\xi_n)$, where multiplication $x_n\xi_n$ is done in $\mathbb F_p$, and $e:\mathbb F_p\to \mathcal S^1$ is the homomorphism with $e(1)=e^{2\pi i/p}$.  Conversely, every character of $\mathbb F_p^\omega$ has this form: given $\chi\in \widehat{\mathbb F}_p^\omega$, define $\xi\in \mathbb F_p^{\mathbb N}$ to satisfy $e(\xi_n)=\chi(\mb e_n)$. Then $\chi(x)=e(\sum_{n\in \mathbb N} x_n\xi_n)$ for all $x\in \mathbb F_p^\omega$.

\begin{lemma}\label{lem:EpContinuity} Let $p$ be prime (not necessarily odd), let $\mathcal F$ be a collection of subsets of $\mathbb N$ each having cardinality $p$, and let $\mathcal E(\mathcal F):=\{\mb e_F:F\in \mathcal F\}\subseteq  \mathbb F_p^\omega$.  Then $\mathcal E(\mathcal F)$ is a set of Bohr recurrence if and only if $\mathcal F$ is partition regular.
\end{lemma}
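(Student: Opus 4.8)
The plan is to convert both sides of the biconditional into the same combinatorial statement about finite colorings of $\mathbb N$, using Observation \ref{obs:BohrIsFiniteIndex}. By that observation, $\mathcal E(\mathcal F)$ is a set of Bohr recurrence if and only if $\mathcal E(\mathcal F)\cap H\neq\varnothing$ for every finite index subgroup $H\leq\mathbb F_p^\omega$, i.e. if and only if for each such $H$ there is an $F\in\mathcal F$ with $\mb e_F\in H$. Since $\mathbb F_p^\omega$ is a vector space over $\mathbb F_p$, every quotient by a finite index subgroup is a finite $\mathbb F_p$-vector space, so each finite index $H$ is the kernel of a homomorphism $\psi:\mathbb F_p^\omega\to\mathbb F_p^m$ for some $m$; conversely every such $\psi$ is determined freely by its values $v(n):=\psi(\mb e_n)\in\mathbb F_p^m$ on the basis. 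Because $\psi(\mb e_F)=\sum_{n\in F}v(n)$, the condition $\mb e_F\in\ker\psi$ reads $\sum_{n\in F}v(n)=0$. Thus Bohr recurrence of $\mathcal E(\mathcal F)$ is equivalent to the statement that for every $m$ and every $v:\mathbb N\to\mathbb F_p^m$ there is an $F\in\mathcal F$ with $\sum_{n\in F}v(n)=0$.

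The single fact driving both directions is that, since $|F|=p$, if $v$ is constant on $F$ with value $w$ then $\sum_{n\in F}v(n)=p\,w=0$ in $\mathbb F_p^m$. For the implication ``$\mathcal F$ partition regular $\Rightarrow$ $\mathcal E(\mathcal F)$ Bohr recurrent,'' I would fix a homomorphism $\psi$ and regard $v:\mathbb N\to\mathbb F_p^m$ as a coloring of $\mathbb N$ by the finitely many elements of $\mathbb F_p^m$. Partition regularity then yields an $F\in\mathcal F$ on which $v$ is constant, and the displayed fact gives $\sum_{n\in F}v(n)=0$, i.e. $\mb e_F\in\ker\psi$. As $\psi$ was arbitrary, $\mathcal E(\mathcal F)$ meets every finite index subgroup.

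For the converse, given a finite coloring $c:\mathbb N\to\{1,\dots,r\}$, I would choose the homomorphism $\psi$ with $\psi(\mb e_n)=\mb u_{c(n)}$, the $c(n)$-th standard basis vector of $\mathbb F_p^r$. Bohr recurrence supplies an $F\in\mathcal F$ with $\sum_{n\in F}\mb u_{c(n)}=0$ in $\mathbb F_p^r$; the $i$-th coordinate of this sum is $|\{n\in F:c(n)=i\}|$ reduced mod $p$. Each of these counts therefore lies in $\{0,1,\dots,p\}$ and is divisible by $p$, so each is $0$ or $p$, while their sum is $|F|=p$. Hence exactly one color occurs (with multiplicity $p$), so $F$ is monochromatic, proving $\mathcal F$ partition regular.

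I do not anticipate a serious obstacle: essentially all the content sits in the reformulation via Observation \ref{obs:BohrIsFiniteIndex} and the elementary zero-sum computation. The only point needing care is the coordinate-counting step in the converse, where one must use both that each color count is divisible by $p$ and that the counts sum to $p=|F|$ in order to force a single color; this is precisely where the hypothesis that each $F\in\mathcal F$ has cardinality exactly $p$ is used, and it is the analogue of the role played by $p\mid d$ in Lemma \ref{lem:LiftToEp}.
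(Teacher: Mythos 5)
Your proof is correct and follows essentially the same route as the paper's: both directions come down to coloring $\mathbb N$ by the values of a homomorphism (equivalently, characters) on the basis vectors and exploiting $|F|=p$ --- a monochromatic $F$ forces the sum $\mb e_F$ into the kernel, and conversely the mod-$p$ counts $|F\cap C_j|$ force $F$ to be monochromatic. The only differences are cosmetic: you work uniformly in the additive picture of Observation \ref{obs:BohrIsFiniteIndex} (homomorphisms to $\mathbb F_p^m$) where the paper's forward direction uses characters valued in $p$-th roots of unity, and you argue the converse directly rather than by contrapositive.
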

The implication ``$\mathcal E(\mathcal F)$ is a set of Bohr recurrence if $\mathcal F$ is partition regular'' is a special case of Lemma 3.3 of \cite{GivensKunen} (originally proved in \cite{GivensThesis}).  The converse is a special case of Lemma 3.5 of \cite{GivensKunen}.

\begin{proof}
First suppose that $\mathcal F$ is a partition regular collection of subsets of $\mathbb N$ each having cardinality $p$. We will prove that $\mathcal E(\mathcal F)$ is a set of Bohr recurrence by showing that for every finite set of characters $\chi_1,\dots, \chi_r\in \widehat{\mathbb F}_p^\omega$, there is an $\mb e_F\in \mathcal E(\mathcal F)$ such that $\chi_j(\mb e_F)=1$ for each $j$.  So fix such a collection of characters $\chi_1,\dots, \chi_r$.

For $j\leq r$ and $m\leq p$, let $A_{j,m}:=\{n\in\mathbb N:\chi_{j}(\mb e_n)=\exp(2\pi im/p)\}$.  Let $\mathcal P=\{P_1,\dots,P_k\}$ be the partition of $\mathbb N$ induced by the $A_{j,m}$ (i.e.~the minimal nonempty elements of the algebra of subsets of $\mathbb N$ generated by the $A_{j,m}$). Then each $\chi_j$ is constant on the elements of $\mathcal P$.  By the partition regularity of $\mathcal F$, we may choose $F\in \mathcal F$ so that $F\subseteq  P$ for some element $P$ of $\mathcal P$. To see that $\chi_j(\mb e_F)=1$ for each $j$, note that $\chi_{j}$ is constant on $\{\mb e_n:n\in F\}$; call this constant $s_j$.  Then
\[\chi_j(\mb e_F)= \chi_j\Bigl(\sum_{n\in F} \mb e_n\Bigr)=\prod_{n\in F}\chi_j(\mb e_{n})=\prod_{n\in F} s_j =s_j^p=1,\]
since $\chi_j$ takes values in the $p^\text{th}$ roots of unity.

Now suppose that $\mathcal F$ is not partition regular, meaning there is a partition of $\mathbb N$ into finitely many cells $C_1, \dots, C_r$ such that for all $F\in \mathcal F$, $F$ is not contained in any of the $C_j$.  We will define homomorphisms $\psi_1,\dots,\psi_r:\mathbb F_p^\omega \to \mathbb F_p$ such that for each $F\in \mathcal F$, there is a $j$ with $\psi_j(\mb e_F)\neq 0$.  This will show that $\mathcal E(\mathcal F)$ has empty intersection with the finite index subgroup $\bigcap_{j=1}^d \ker \psi_j$, meaning $\mathcal E(\mathcal F)$ is not a set of Bohr recurrence.

  For $j\leq r$ and $x\in \mathbb F_p^\omega$, define $\psi_j(x):=\sum_{n\in C_j}x_n$.  This is well defined, since $x_n=0$ for all but finitely many $n$.  Let $F\in \mathcal F$.  Then $\psi_j(\mb e_F)=|F\cap C_j|$ mod $p$.  Since $F$ is not contained in any of the $C_j$, there is a $j$ such that $0<|F\cap C_j|<p$, meaning $\psi_j(\mb e_F)\neq 0$ for this $j$. This proves that $\mathcal E(\mathcal F)$ has empty intersection with $\bigcap_{j=1}^d \ker \psi_j$. \end{proof}

\begin{remark}
  Specializing Lemma \ref{lem:EpContinuity} to $p=2$, the set $\mathcal E(\mathcal F)$ becomes a subset of $\mathcal E_1^{(2)}-\mathcal E_1^{(2)}$.  Since $\mathcal E_1^{(2)}$ is an $I_0$ set (as it is independent), this special case of Lemma \ref{lem:EpContinuity} is also a special case of Proposition \ref{prop:RelativeKatznelson2Diff}.
\end{remark}

As with Conjecture \ref{conj:SpecialKatznelson4Z}, we are unable to prove any nontrivial instance of Conjecture \ref{conj:KatznelsonEp}.  One special case  of this conjecture in $\mathbb F_3^\omega$ is given by $S_{3AP}:=\{\mb e_{n}+\mb e_{n+d}+\mb e_{n+2d}:n, d\in \mathbb N\}$.  The collection $\{\{n,n+d,n+2d\}:n,d\in \mathbb N\}$ is partition regular, by van der Waerden's theorem on arithmetic progressions, so Lemma \ref{lem:EpContinuity} says that $S_{3AP}$ is Bohr recurrent.  The problem of proving (or refuting) that $S_{3AP}$ is a set of chromatic recurrence seems interesting, but it is unclear whether this special case will be any easier than the full generality of Conjecture \ref{conj:KatznelsonEp}.

\begin{remark}
Like the proof of Proposition \ref{prop:RelativeKatznelson4Diff}, our proof of Proposition \ref{prop:KEpImpliesKFp} uses only those properties of chromatic recurrence listed in Remark \ref{rem:Real}.  Trivial modifications of the proof of Proposition \ref{prop:KEpImpliesKFp} therefore prove the following generalization.
\end{remark}

\begin{proposition}
  Let $p$ be prime, let $d>2$ be divisible by $p$, and let $\mathcal R$ be a collection of subsets of $\mathbb F_p^\omega$ such that
  \begin{enumerate}
    \item[(i)]  if $\rho:\mathbb F_p^\omega\to\mathbb F_p^\omega$ is a surjective homomorphism and $R\in \mathcal R$, then $\rho(R)\in \mathcal R$;

    \item[(ii)]  if $S\in\mathcal R$ and $S'\supseteq S$, then $S'\in \mathcal R$.
  \end{enumerate}
If every subset of $\mathcal E_d^{(p)}$ which is a set of Bohr recurrence belongs to $\mathcal R$, then every subset of $\mathbb F_p^\omega$ which is a set of Bohr recurrence also belongs to $\mathcal R$.
\end{proposition}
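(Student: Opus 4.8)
The plan is to rerun the proof of Proposition \ref{prop:KEpImpliesKFp} essentially verbatim, substituting membership in $\mathcal R$ for ``being a set of chromatic recurrence'' at each step. The point to keep in mind is that that proof invokes chromatic recurrence in exactly three places: it uses the hypothesis of Conjecture \ref{conj:KatznelsonEp} (every Bohr recurrent subset of $\mathcal E_d^{(p)}$ is chromatically recurrent), it uses Lemma \ref{lem:Image} (chromatic recurrence is preserved under surjective homomorphic images), and it uses upward closure of chromatic recurrence. The abstract hypotheses here supply replacements for all three: the displayed hypothesis ``every Bohr recurrent subset of $\mathcal E_d^{(p)}$ belongs to $\mathcal R$'' replaces the first, property (i) replaces the second, and property (ii) replaces the third.

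Concretely, fix a prime $p$, an integer $d>2$ divisible by $p$, and a set of Bohr recurrence $S\subseteq \mathbb F_p^\omega$. First I would fix a surjective homomorphism $\rho:\mathbb F_p^\omega\to\mathbb F_p^\omega$ with $\rho(\mathcal E_1^{(p)})=\mathbb F_p^\omega$, exactly as in the proof of Proposition \ref{prop:KEpImpliesKFp}. Lemma \ref{lem:LiftToEp} then shows that $S':=\rho^{-1}(S)\cap\mathcal E_d^{(p)}$ is a set of Bohr recurrence. Since $S'$ is a subset of $\mathcal E_d^{(p)}$ that is Bohr recurrent, the standing hypothesis gives $S'\in\mathcal R$. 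Applying property (i) to the surjection $\rho$ gives $\rho(S')\in\mathcal R$. Finally, $S'\subseteq\rho^{-1}(S)$ forces $\rho(S')\subseteq S$, so property (ii) yields $S\in\mathcal R$, as required.

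There is essentially no obstacle to overcome: all of the genuine mathematical content is already contained in Lemma \ref{lem:LiftToEp} (which in turn rests on the Bogoliouboff--F{\o}lner type statement Lemma \ref{lem:BogFp}), and the argument above is purely formal. The only care required is the bookkeeping verification flagged in the first paragraph, namely confirming that no property of chromatic recurrence beyond surjective-image-stability and upward closure is secretly used in the proof of Proposition \ref{prop:KEpImpliesKFp}; once this is checked, properties (i) and (ii) of $\mathcal R$ carry the argument without further change.
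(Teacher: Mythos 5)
Your proposal is correct and is exactly the argument the paper intends: the paper itself proves this proposition by declaring it a ``trivial modification'' of the proof of Proposition \ref{prop:KEpImpliesKFp}, substituting the hypothesis on $\mathcal R$ for the Bohr-to-chromatic implication on subsets of $\mathcal E_d^{(p)}$, property (i) for Lemma \ref{lem:Image}, and property (ii) for upward closure of chromatic recurrence. Your step-by-step execution (choose $\rho$ with $\rho(\mathcal E_1^{(p)})=\mathbb F_p^\omega$, apply Lemma \ref{lem:LiftToEp} to get $S'=\rho^{-1}(S)\cap\mathcal E_d^{(p)}$ Bohr recurrent, then conclude $S'\in\mathcal R$, $\rho(S')\in\mathcal R$, and finally $S\in\mathcal R$ since $\rho(S')\subseteq S$) is precisely that modification, carried out correctly.
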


\subsection{Cayley graphs}

If $G$ is an abelian group and $V, S\subseteq  G$, the \emph{Cayley graph} $\Cay(V,S)$ has vertex set $V$, with two vertices $g,g'$ joined by an edge if $g-g'\in S$ or $g'-g\in S$. In this notation, the usual Cayley graph determined by $S$ is $\Cay(G,S)$. It is easy to see that $S$ is a set of chromatic recurrence if and only if the chromatic number of $\Cay(G,S)$ is infinite - see \cite{Katznelson} for further exposition.

Combining Proposition \ref{prop:KEpImpliesKFp} and Lemma \ref{lem:EpContinuity}, we see that the following conjecture, for a fixed $p$, is equivalent to Conjecture \ref{conj:KatznelsonFp}, for the same $p$.  To state it, we define the \emph{chromatic number} of a hypergraph with vertex set $V$ and edge set $\mathcal F$ to be the minimum $k\in \mathbb N$ such that $V$ may be partitioned into $k$ sets where no $F\in \mathcal F$ lies in one cell of the partition.  If no such $k$ exists, the hypergraph has infinite chromatic number.

\begin{conjecture}
  Let $p$ be an odd prime.  If $\mathcal F$ is the edge set of a $p$-uniform hypergraph with vertex set $\mathbb N$ having infinite chromatic number, then $\Cay(\mathbb F_p^\omega,\mathcal E(\mathcal F))$  has infinite chromatic number.
\end{conjecture}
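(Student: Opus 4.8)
The plan is to justify the equivalence claim by showing that the displayed conjecture, read for a fixed odd $p$, is a purely graph-theoretic rephrasing of the $d=p$ case of Conjecture \ref{conj:KatznelsonEp}, and then to pass between that case and Conjecture \ref{conj:KatznelsonFp} using the tools already assembled in this section. First I would translate each notion appearing in the conjecture into the language of recurrence. By the observation opening the Cayley graph subsection, a set $S\subseteq \mathbb F_p^\omega$ is a set of chromatic recurrence if and only if $\Cay(\mathbb F_p^\omega,S)$ has infinite chromatic number; applied to $S=\mathcal E(\mathcal F)$ this converts the conclusion of the conjecture into the assertion that $\mathcal E(\mathcal F)$ is a set of chromatic recurrence. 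On the hypothesis side, unwinding the definition of the chromatic number of a hypergraph shows that the $p$-uniform hypergraph $(\mathbb N,\mathcal F)$ has infinite chromatic number precisely when no finite partition of $\mathbb N$ avoids having a cell that contains some $F\in\mathcal F$; that is, precisely when $\mathcal F$ is partition regular in the sense of Lemma \ref{lem:EpContinuity}.

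With this dictionary in hand, Lemma \ref{lem:EpContinuity} supplies the essential identification, namely that ``$\mathcal F$ is partition regular'' is the same as ``$\mathcal E(\mathcal F)$ is a set of Bohr recurrence.'' Thus the conjecture asserts exactly that for every collection $\mathcal F$ of $p$-element subsets of $\mathbb N$, if $\mathcal E(\mathcal F)$ is a set of Bohr recurrence then it is a set of chromatic recurrence. Since the map $F\mapsto \mb e_F$ is a bijection between $p$-element subsets of $\mathbb N$ and elements of $\mathcal E_p^{(p)}$ (the support of $\mb e_F$ recovers $F$, so distinct $F$ give distinct $\mb e_F$), every subset of $\mathcal E_p^{(p)}$ has the form $\mathcal E(\mathcal F)$ for a unique such $\mathcal F$. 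Hence the conjecture is literally equivalent to Conjecture \ref{conj:KatznelsonEp} in the case $d=p$.

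It then remains to link Conjecture \ref{conj:KatznelsonEp} with $d=p$ to Conjecture \ref{conj:KatznelsonFp}. One direction is immediate: because $\mathcal E_p^{(p)}\subseteq \mathbb F_p^\omega$, Conjecture \ref{conj:KatznelsonFp} restricts on subsets of $\mathcal E_p^{(p)}$ to give Conjecture \ref{conj:KatznelsonEp} for $d=p$. The other direction is exactly Proposition \ref{prop:KEpImpliesKFp}, and this is where the hypothesis that $p$ is odd enters: Proposition \ref{prop:KEpImpliesKFp} requires $d>2$ divisible by $p$, and the choice $d=p$ meets $d>2$ precisely when $p\geq 3$. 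Chaining the equivalences, conjecture $\Leftrightarrow$ Conjecture \ref{conj:KatznelsonEp}$(d=p)$ $\Leftrightarrow$ Conjecture \ref{conj:KatznelsonFp}, completes the argument.

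I do not anticipate a genuine obstacle, since all the substantive content is already packaged in Lemma \ref{lem:EpContinuity} and Proposition \ref{prop:KEpImpliesKFp}; what remains is bookkeeping through the dictionary above. The single point that demands care is the parity hypothesis: the equivalence holds only for odd $p$ because Proposition \ref{prop:KEpImpliesKFp} is unavailable at $d=p=2$, as the authors note when they remark that they cannot reduce the $d=p=2$ case of Conjecture \ref{conj:KatznelsonEp} to Conjecture \ref{conj:KatznelsonFp}. I would therefore route the forward implication strictly through Proposition \ref{prop:KEpImpliesKFp} with $d=p$, verifying $d>2$ there, and avoid invoking any $d=2$ instance.
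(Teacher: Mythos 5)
Your proposal is correct and takes essentially the same route as the paper: the paper justifies this conjecture's statement precisely by combining Lemma \ref{lem:EpContinuity} (partition regularity of $\mathcal F$ $\Leftrightarrow$ Bohr recurrence of $\mathcal E(\mathcal F)$) with Proposition \ref{prop:KEpImpliesKFp} applied with $d=p$, which is where the requirement $p>2$ (i.e.\ $p$ odd) enters, together with the observation that infinite chromatic number of $\Cay(\mathbb F_p^\omega,S)$ is the same as chromatic recurrence of $S$. Your added bookkeeping --- the hypergraph-chromatic-number/partition-regularity dictionary and the bijection $F\mapsto \mb e_F$ identifying subsets of $\mathcal E_p^{(p)}$ with collections $\mathcal F$ --- is exactly the unwinding the paper leaves implicit.
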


\section{Why Conjectures \ref{conj:SpecialKatznelson4Z} and \ref{conj:KatznelsonEp} are not as easy as Proposition \ref{prop:RelativeKatznelson2Diff}}\label{sec:DashHopes}

We now explain why Conjecture \ref{conj:KatznelsonEp} (when $d>2$) is not susceptible to the method of proof of Proposition \ref{prop:RelativeKatznelson2Diff}.  We first explain the latter proof in terms of Cayley graphs.

The proof of Proposition \ref{prop:RelativeKatznelson2Diff} showed that when $V$ is an $I_0$ set and $S\subseteq  V-V$ is a set of Bohr recurrence, then $\Cay(V,S)$ has infinite chromatic number.  Now consider the special case of Conjecture \ref{conj:KatznelsonEp} with $p=2$ and $d=4$.  One may attempt to prove this special case analogously to Proposition \ref{prop:RelativeKatznelson2Diff}, by fixing a set of Bohr recurrence $S\subseteq  \mathcal E_4^{(2)}\subseteq \mathbb F_2^\omega$, setting
$V=\mathcal E_2^{(2)}$, and proving that $\Cay(V,S)$ has infinite chromatic number. But this $\Cay(V,S)$ may have finite chromatic number, as we demonstrate below.  Since $\Cay(V,S)$ is not all of $\Cay(\mathbb F_2^{\omega},S)$, this example will not disprove Conjecture \ref{conj:KatznelsonEp}.

With $V=\mathcal E_2^{(2)}$, we will find a set of Bohr recurrence $S\subseteq \mathcal E_4^{(2)}$ such that $\Cay(V,S)$ has chromatic number $2$.  Instead of the usual presentation of $\mathbb F_2^\omega$, we consider $Fin(\mathbb Z\times \mathbb Z)$, the collection of finite subsets of $\mathbb Z\times \mathbb Z$, with the group operation of symmetric difference.  Since $Fin(\mathbb Z\times \mathbb Z)$ is a countably infinite abelian group all of whose elements have order $2$, it is isomorphic to $\mathbb F_2^\omega$. Let
\[S_{\square}:=\{\{(n,m),(n+d,m),(n,m+d),(n+d,m+d)\}:n,m\in \mathbb Z, d\in \mathbb N\}\subseteq  Fin(\mathbb Z\times \mathbb Z),\]
so that $S_{\square}$ is the collection of subsets of the integer lattice forming the vertices of a square with sides parallel to the coordinate axes.  Gallai's multidimensional generalization of van der Waerden's theorem (see  Chapter 42 of \cite{ColoringBook} for exposition) implies $S_{\square}$ is partition regular.    By a straightforward modification of Lemma \ref{lem:EpContinuity}, $S_{\square}$ is therefore a Bohr recurrent subset of $Fin(\mathbb Z\times \mathbb Z)$.

Let $Fin_2$ be the $2$-elements subsets of $\mathbb Z\times \mathbb Z$, and let $\mathcal G:=\Cay(Fin_2,S_{\square})$.  This is the graph whose vertices are the $2$-element subsets of $\mathbb Z\times \mathbb Z$, where two vertices $\{a,b\}$, $\{c,d\}$ are joined by an edge if their symmetric difference forms the vertices of a square with sides parallel to the coordinate axes.   We will prove that $\mathcal G$ has chromatic number $2$, by showing that its connected components are each isomorphic to a either a singleton, a pair of vertices connected by an edge, or an infinite path. To see this, fix a two element set $v=\{(n,m),(r,s)\}\subseteq  \mathbb Z\times \mathbb Z$, and  note that:
\begin{enumerate}
\item[(i)] If $v$ is not two of the four vertices of some square parallel to the coordinate axes, then $v$ is not connected to any other element of $Fin_2$ by an edge of $\mathcal G$.

\item[(ii)] If $v$ forms two opposed vertices of a square, such as $\{(n,m),(n+d,m+d)\}$, then $v$ is connected in $\mathcal G$ to exactly one element $v'$ of $Fin_2$, consisting of the other two vertices of the square, and this $v'$ is connected only to $v$.

\item[(iii)] If $v$ forms two vertices on the same side of a square, such as $\{(n,m),(n,s)\}$, then $v$ lies on the infinite path $P$ in $\mathcal G$ with vertices $\{(n+jd,m),(n+jd,s):j\in \mathbb Z\}$, where $d=m-s$.  Note that each element in $P$ is connected to exactly two other elements of $P$.  It is easy to check that $P$ contains no cycle, so it is indeed an infinite path.
\end{enumerate}
Thus the connected components of $\mathcal G$ have chromatic number $2$, so $\mathcal G$ itself has chromatic number $2$.  To interpret this example in the usual presentation of $\mathbb F_2^\omega$, we define an isomorphism $\phi$ between $\mathbb F_2^\omega$ and $Fin(\mathbb Z\times \mathbb Z)$.  First fix a bijection $b:\mathbb N\to \mathbb N\times \mathbb N$.  Now define $\phi: F_2^\omega\to Fin(\mathbb Z\times \mathbb Z)$ by $\phi(x)=b(\{i\in \mathbb N:x_i\neq 0\})$.  This isomorphism identifies $\mathcal E_2^{(2)}$ with $Fin_2$, and likewise identifies $S_{\square}$ with a subset of $\mathcal E_4^{(2)}$.

In light of the above, it may be worthwhile to investigate $S_{\square}$ as a potential counterexample to Conjecture \ref{conj:KatznelsonFp}.

\begin{remark}
  A construction similar to the one above will demonstrate why Conjecture \ref{conj:SpecialKatznelson4Z} cannot be proved using our proof of Proposition \ref{prop:RelativeKatznelson2Diff}.
\end{remark}

\section{Suggestions}\label{sec:Suggestions}

Propositions \ref{prop:RelativeKatznelson2Diff}, \ref{prop:KrizInDiff}, and Theorem 1.3 of \cite{griesmer2020separating} suggest the following problems.  For the first, it is useful to note that a set $S\subseteq  \mathbb Z$ is dense in the Bohr topology of $\mathbb Z$ if and only if for all $m\in \mathbb Z$, the translate $S-m$ is a set of Bohr recurrence.

\begin{problem}\label{prob:DenseVersion}
  Prove or disprove: there is a set $S\subseteq  \mathbb Z$ which is dense in the Bohr topology of $\mathbb Z$ such that if $S'\subseteq  S$ is also dense in the Bohr topology, then for all $m\in \mathbb Z$, $S' - m$ is a set of chromatic recurrence.
\end{problem}

\begin{problem}\label{prob:HereditarySeparation}Prove or disprove: if $S\subseteq  \mathbb Z$ is a set of density recurrence, then there is an $S'\subseteq S$ which is a set of chromatic recurrence and not a set of density recurrence.
\end{problem}

The variants of these problems where $\mathbb Z$ is replaced by another countable abelian group are equally interesting.

\section{How Theorem \ref{thm:Bogoliouboff} follows from F{\o}lner's proof}\label{sec:Folner}

Here here are some remarks on \cite{Folner} to briefly indicate how the proof of \cite[Theorem 1]{Folner} implies Theorem \ref{thm:Bogoliouboff} above.

Suppose $G$ is an infinite discrete abelian group.  Given a function $f$ on $G$ and $t\in G$, we write $f_t$ for the translate of $f$ defined by $f_t(x):= f(x-t)$. An \emph{invariant mean} (or \emph{Banach mean value} in the terminology of \cite{Folner}) is a positive linear functional on $l^\infty(G)$ satisfying $M(1_G)=1$ and $M(f_t)=M(f)$ for every $f\in l^\infty(G)$ and $t\in G$. When displaying the variable of $f$, we may write $M_t(f(t))$ for $M(f)$.

The proof of Theorem 1 of \cite{Folner}, mostly contained in Section 4 therein, begins by fixing an invariant mean $M$ on $l^\infty(G)$.  Given a partition  $G=A_1\cup \dots \cup A_r$ into finitely many sets, we have $M(1_{A_i})>0$ for at least one $i$. We now fix such an $i$ and omit the subscript, writing $A$ for this $A_i$. Based on this $M$, a function $\mu$ on $G$ is constructed with the property that $\mu(x)>0$ implies that $A\cap (A-x)$ ($=\{a\in A: \{a,a+x\}\subset A\}$) satisfies $M(1_{A\cap (A-x)})>0$. Note that when $S\subset G$ is finite, we have $M(1_S)=0$, so $M(1_S)>0$ implies $S$ is infinite.  Thus $A\cap (A-x)$ is infinite for each $x\in G$ where $\mu(x)>0$.

Finally, the function $\mu*\mu:G\to \mathbb R$ is defined by $\mu*\mu(x):=M_t(\mu(t) \mu(x-t))$.  More formally, one could write $\mu*\mu(x):=M(\mu \cdot \tilde{\mu}_x)$, where $\tilde{\mu}(t):=\mu(-t)$.

\begin{observation}\label{obs:Convolution}  If $x\in G$ satisfies $\mu*\mu(x)>0$, then $x\in \Delta_2(A)$.
\end{observation}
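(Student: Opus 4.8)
The plan is to unwind the definition of $\mu*\mu$ and trace the positivity of $\mu*\mu(x)$ back through the construction of $\mu$ to produce four mutually distinct elements of $A$ witnessing $x \in \Delta_2(A)$. By definition $\mu*\mu(x) = M_t\bigl(\mu(t)\mu(x-t)\bigr)$, so if this mean is positive then $\mu(t)\mu(x-t)>0$ for some $t \in G$; in fact (since a positive mean of a nonnegative function vanishing on a finite set cannot be positive) the set of such $t$ is infinite. Fix one such $t$, and write $y := t$ and $z := x - t$, so that $\mu(y) > 0$ and $\mu(z) > 0$ and $y + z = x$.

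Next I would invoke the key property of $\mu$ recorded just before the observation: $\mu(y)>0$ implies $A \cap (A - y)$ is infinite, and likewise $\mu(z)>0$ implies $A \cap (A - z)$ is infinite. From the first, there is an $a \in A$ with $a + y \in A$ as well; from the second, there is a $c \in A$ with $c + z \in A$. Setting $b := a + y$ and $d := c + z$, I would observe that
\[
(b - a) + (d - c) = y + z = x,
\]
so that $x = (b-a)+(d-c)$ with $a,b,c,d \in A$. This already exhibits $x$ as a sum of two differences of elements of $A$; the remaining task is only to arrange that $a,b,c,d$ are \emph{mutually distinct}, which is exactly what $\Delta_2(A)$ requires (as opposed to the coarser iterated difference set $(A-A)-(A-A)$).

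The main obstacle, then, is the distinctness bookkeeping, and the infinitude established above is precisely the tool for handling it. Because $A \cap (A-y)$ and $A \cap (A-z)$ are each infinite, I have freedom to choose $a$ and $c$ avoiding any finite list of forbidden values. Concretely, I would first pick $a$ freely, which determines $b = a+y$; then pick $c$ from the infinite set $A \cap (A-z)$ so as to avoid the finitely many values $a$, $b$, $b - z$, and $a - z$, which guarantees that $c$ and $d = c+z$ are both distinct from $a$ and $b$ (the cases $y=0$ or $z=0$ can be excluded since $\mu$ is supported where the relevant difference sets are infinite, or handled directly as degenerate). This yields four mutually distinct elements of $A$ with $x = (b-a)-(c-d)$ after renaming signs to match the $\Delta_2$ convention, completing the proof that $x \in \Delta_2(A)$.
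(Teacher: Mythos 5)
Your proof follows the same route as the paper's: extract from $\mu*\mu(x)>0$ a $t$ with $\mu(t)\mu(x-t)>0$, use the stated property of $\mu$ to obtain infinitely many pairs $\{a,a+t\}\subseteq A$ and $\{c,c+(x-t)\}\subseteq A$, and then choose the pairs so that the four elements are mutually distinct. Your bookkeeping for keeping the second pair disjoint from the first (choosing $c$ outside $\{a,\,b,\,b-z,\,a-z\}$) is correct, and is in fact more explicit than the paper's. However, there is one genuine flaw: the treatment of the degenerate cases $y=0$ and $z=0$. You claim these ``can be excluded since $\mu$ is supported where the relevant difference sets are infinite.'' That is not a valid exclusion: $A\cap(A-0)=A$ itself has positive mean, so nothing in the quoted property of $\mu$ prevents $\mu(0)>0$ (and in F{\o}lner's construction one indeed expects $\mu(0)>0$, since $\mu(0)$ reflects the density of $A$). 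Hence the single $t$ you fixed may perfectly well be $0$, or equal to $x$.

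Nor can that case be ``handled directly as degenerate.'' If $t=0$, your decomposition collapses to $a=b$, and the remaining information---infinitely many pairs $\{c,c+x\}\subseteq A$---does not by itself put $x$ in $\Delta_2(A)$: from two such pairs $\{c_1,c_1+x\}$ and $\{c_2,c_2+x\}$ the only values of a signed sum of four mutually distinct elements are $0$, $\pm 2x$, and $\pm 2(c_1-c_2)$, none of which need equal $x$. The repair is immediate and uses a fact you proved and then abandoned: the set of $t$ with $\mu(t)\mu(x-t)>0$ is \emph{infinite}, so you may choose $t\notin\{0,x\}$ at the outset, which forces $y\neq 0$ and $z\neq 0$ and makes your distinctness argument complete. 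This is precisely how the paper uses the infinitude of admissible $t$: it is needed in order to \emph{choose} $t$, not merely to be observed.
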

To see this, note that if $\mu*\mu(x)>0$, then the set of $t$ where $\mu(t)\mu(x-t)>0$ must be infinite.  For such $t$, we have $\mu(t)>0$ and $\mu(-t+x)>0$, so there are infinitely many $a\in A$ such that $\{a,a+t\}\subset A$, and infinitely many $a'\in A$ such that $\{a',a'+x-t\}\subset A$.  For each $x\in G$ with $\mu*\mu(x)>0$, we may therefore find a $t\in G$ and $a, a'\in A$ so that $a, a+t, a',$ and $a'+x-t$ are mutually distinct.  Setting $a_1=a,a_2=a+t,a_3 = a'$, and $a_4=a'+x-t$, we have $(a_1-a_2) - (a_3-a_4) = x$.  So every $x$ where $\mu*\mu (x)>0$ can be written as $(a_1-a_2)-(a_3-a_4)$, with the $a_i\in A$ mutually distinct.  This means that $\{x:\mu*\mu(x)>0\} \subseteq \Delta_2(A)$.

In Section 4 of \cite{Folner} it is shown that $\{x\in G: \mu*\mu(x)>0\}$ contains a Bohr neighborhood of $0$.  Combined Observation \ref{obs:Convolution}, we see that $\Delta_2(A)$ contains a Bohr neighborhood of $0$.

\frenchspacing
\bibliographystyle{amsplain}
\bibliography{KProblemRemarks}

\end{document}